\theoremstyle{plain}
\newtheorem{theorem}{Theorem}[section]
\newtheorem*{acknowledgement}{Acknowledgements}
\newtheorem{corollary}[theorem]{Corollary}
\newtheorem{definition}[theorem]{Definition}
\newtheorem{lemma}[theorem]{Lemma}
\newtheorem{proposition}[theorem]{Proposition}
\numberwithin{equation}{section}
\newtheorem{remark}{Remark}[section]
\begin{document}
\title{$SL_2(\mathbb{C})$-Character Variety of a Hyperbolic Link and Regulator}
\author{Weiping Li}
\address{Department of Mathematics\\
         Oklahoma State University\\
         Stillwater, OK 74078
         USA}
\email{wli@math.okstate.edu}
\author{Qingxue Wang}
\address{School of Mathematical Sciences \\
         Fudan  University\\
         Shanghai, 200433\\
         P.R. China}
\email{qxwang@fudan.edu.cn}

\begin{abstract}
In this paper, we study the $SL_2(\mathbb{C})$-character variety of
a hyperbolic link in $S^3$. We analyze a special smooth projective
variety $Y^h$ arising from some 1-dimensional irreducible slices on
the character variety. We prove that a natural symbol obtained from
these 1-dimensional slices is a torsion in $K_2({\mathbb C}(Y^h))$.
By using the regulator map from $K_2$ to the corresponding Deligne
cohomology, we get some variation formulae on some Zariski open
subset of $Y^h$. From this we give some discussions on a possible
parametrized volume conjecture for both hyperbolic links and knots.
\end{abstract}

\thanks{Q. Wang was supported by NSFC grant \#10801034.}
\keywords{Character variety, Algebraic K-theory, Chern-Simons
invariant, Hyperbolic links, Volume Conjecture, Regulator map}
\subjclass[2000]{Primary:57M25, 57M27; Secondary:14H50, 19F15}

\maketitle

\section{Introduction}

This is the sequel to our previous work \cite{LW2} on the
generalized volume conjecture for a hyperbolic knot in $S^3$. In
this paper we shall study a hyperbolic link in $S^3$, and extend
several results from the knot case. The main idea is to apply the
regulator map in K-theory to the $SL_2(\mathbb{C})$-character
varieties of hyperbolic links.

For a link $L$ in $S^3$, Kashaev (\cite{Ka1}) introduced a sequence
of complex numbers $\{K_N|N>1, \text{odd integer}\}$, which were
derived from a matrix version of the quantum dilogarithms. Kashaev's
Volume Conjecture (\cite{Ka2}) predicts that for any hyperbolic link
$L$ in $S^3$, the asymptotic behavior of his invariants $\{K_N\}$
regains the hyperbolic volume of $S^3-L$. This was verified for the
figure eight knot (\cite{Ka2}). The Volume Conjecture provides an
intriguing relationship between the quantum invariants and the
hyperbolic volume, but we still do not fully understand it.

For the knot case, Murakami-Murakami (\cite{MM}) showed the Kashaev
invariants $\{K_N\}$ can be identified with the values of normalized
colored Jones polynomial at the primitive $N$-th roots of unity.
From this, a new formulation of the Volume Conjecture states that
the asymptotic behavior of the colored Jones invariants of any
knot equals the Gromov simplicial volume of its complement in $S^3$.
This new version of the volume conjecture bridges the quantum
invariants of the knot with its classical geometry and topology.
However, this formulation does not fit well for links, since it does
not hold for many split links (see \cite{MMOTY}). Hence it is a very
interesting question to see what is really behind the volume
conjecture for links.

Following Witten's $SU(2)$ topological quantum field theory, Gukov
(\cite{Guk}) proposed a complex version of Chern-Simons theory and
generalized the volume conjecture to a $\mathbb{C}^{*}$-
parametrized version with parameter lying on the zero locus of the
$A$-polynomial of the knot. In \cite{LW2}, we constructed a natural
torsion element in $K_2$ of the function field of the curve defined
by the $A$-polynomial. We then showed that the part from the
$A$-polynomial in Gukov's generalized volume conjecture can be
interpreted by the regulator map on this torsion element. In
particular, this implied the Bohr-Sommerfeld quantization condition
posed by Gukov (\cite[Page~597]{Guk}).

It is natural to ask if there exists a parametrized volume
conjecture for links in $S^3$ as Gukov showed for the knot case. This
is the motivation of this paper. Now we have to deal with two
problems for links with more than one component. First, its
$SL_2(\mathbb{C})$-character variety has dimension $>1$, and it is
not clear how to define an $A$-polynomial for such a link, which
will contain the geometric information like volume and Chern-Simons
as in the knot case. Secondly, it is not clear how to relate the
colored Jones polynomials to its $SL_2(\mathbb{C})$-character
variety. In this paper, we shall focus on the first problem for
hyperbolic links. We introduce $n$ curves on the geometric component
of the character variety. From these curves, we obtain an
$n$-dimensional smooth projective variety $Y^h$, where $n$ is the
number of the components of the link. We construct a natural torsion
element in $K_2$ of the function field of $Y^h$. By applying the
regulator map on this torsion element, we get the variation formulae
(Theorem \ref{the3.2}) on some Zariski open subset of $Y^h$. When
the link has one component, it recovers the results for hyperbolic
knots. This suggests that there may exist a parametrized volume
conjecture for hyperbolic links and the $Y^h$ may provide a
replacement for the zero locus of the $A$-polynomial of a knot. For
the second problem we do not know how to deal with it, and only give
some speculations at the end of Section $4$.

On the other hand, using the dilogarithm, Dupont (\cite{Dup})
constructed explicitly the Cheeger-Chern-Simons class associated to
the second Chern polynomial. Apply it to a closed hyperbolic
$3$-manifold $M$, we get a number in $\mathbb{C} / \mathbb{Z}$. He
(loc.cit.) showed that its imaginary part equals the hyperbolic
volume of $M$ and the real part is the Chern-Simons invariant of
$M$. In general, for an odd dimension hyperbolic manifold of finite
volume, Goncharov (\cite{Gon}) constructed an element in the
Quillen's algebraic K-group of $\mathbb{C}$ and proved that after
applying the Borel regulator, we get the volume of the
manifold. In this paper our approach makes use of the regulator map
for the function field of $Y^h$, and it can be regarded as an
analogue of a family version of theirs for the $SL_2(\mathbb{C})$-character
variety of a hyperbolic link.

Our paper is organized as follows. In section 2, we review the
basics of the $SL_2(\mathbb{C})$-character variety of a hyperbolic
link. We then study the properties of a smooth projective variety
$Y^h$ coming from the $1$ dimensional slices of the character
variety. In section 3, we recall the definitions and basic
properties of $K_2$ of a commutative ring. Then we state and prove
our main results in this section. In Section 4, we give some
discussions related to a possible parametrized volume conjecture for
hyperbolic links.

\section{Character Variety of a hyperbolic link}
\subsection{}
Let $L$ be a hyperbolic link in $S^3$ with $n$ components $K_1,
\dots, K_n$. This means that the complement $S^3-L$ carries a
complete hyperbolic structure of finite volume. Let $N(L)$ be an
open tubular neighborhood of $L$ in $S^3$. Put $M_L=S^3-N(L)$, then
it is a compact $3$-manifold with boundary $\partial M_L$ a disjoint
union of $n$ tori $T_1, \dots, T_n$ and it is called the link
exterior. Note that $\pi_1(S^3-L)$ and $\pi_1(M_L)$ are isomorphic.
In the following, we shall identify them.

Let $R(M_L)=\text{Hom}(\pi_1(M_L),SL_2(\mathbb{C}))$ and
$R(T_i)=\text{Hom}(\pi_1(T_i),SL_2(\mathbb{C}))$ ($i=1,\dots,n$) be
the $SL_2(\mathbb{C})$-representation spaces. We have the natural
action of $SL_2(\mathbb{C})$ on them by conjugation. According to
\cite{CS1}, they are affine algebraic sets and so are the
corresponding character varieties $X(M_L)$ and $X(T_i)$, which are
the algebro-geometric quotients of $R(M_L)$ and $R(T_i)$ by
$SL_2(\mathbb{C})$. We then have the canonical surjective morphisms
$t:R(M_L)\longrightarrow X(M_L)$ and $t_i:R(T_i)\longrightarrow
X(T_i)$ which map a representation to its character. Induced by the
inclusions of $\pi_1(T_i)$ into $\pi_1(M_L)$, we have the
restriction map:
\[
  r: X(M_L)\rightarrow X(T_1)\times \cdots \times X(T_n).
\]

For details on the character varieties, we refer to \cite{CS1,
CGLS, CCGLS, Sha}.

\subsection{}
Let $\rho_0: \pi_1(M_L)\rightarrow SL_2(\mathbb{C})$ be a
representation associated to the complete hyperbolic structure on
$S^3-L$. Then it is irreducible. Denote $\chi_0$ its character. Fix
an irreducible component $R_0$ of $R(M_L)$ containing $\rho_0$. Let
$X_0=t(R_0)$, then $X_0$ is an affine variety of dimension $n$
(\cite{CS1,Sha}). We call it a geometric component of the character
variety. We define $Y_0:=\overline{r(X_0)},$ where the bar means the
Zariski closure of the image $r(X_0)$ in $X(T_1)\times \cdots \times
X(T_n)$.

For $g\in \pi_1(M_L)$, there is a regular function
$I_g:X_0\rightarrow \mathbb{C}$ defined by $I_g(\chi)=\chi(g)$, for
all $\chi\in X_0$. The following proposition was proved in
\cite{CS2}.

\begin{proposition}\label{prop1}
Let $\gamma_i$ be a non-contractible simple closed curve in the
boundary torus $T_i$, $1\leq i\leq n$. Let $g_i\in \pi_1(M_L)$ be an
element whose conjugacy class corresponds to the free homotopy class
of $\gamma_i$. Let $k$ be an integer with $0\leq k\leq n$, and let
$V$ be the algebraic subset of $X_0$ defined by the equations
$I_{g_i}^2(\chi)=4$, $k<i\leq n$. Let $V_0$ denote an irreducible
component of $V$ containing $\chi_{\rho_0}$. Then if $\chi$ is a
point of $V_0$, $i$ is an integer with $k<i\leq n$, and $g$ is an
element of the subgroup (defined up to conjugacy)
$\text{Im}(\pi_1(T_i)\rightarrow \pi_1(M_L))$, we have
$I_{g}(\chi)=\pm 2$. Furthermore, if $k=0$, then
$V_0=\{\chi_{\rho_0}\}$.
\end{proposition}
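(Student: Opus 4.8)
The plan is to translate the trace statement into one about the eigenvalues of the peripheral holonomy, and then to pin down these eigenvalues along $V_0$ by combining Thurston's local description of $X_0$ near $\chi_{\rho_0}$ with the irreducibility of $V_0$. As a first step I would record the relevant $SL_2(\mathbb{C})$ linear algebra. Fix generators $\mu_i,\lambda_i$ of $\pi_1(T_i)\cong\mathbb{Z}^2$ and write $\gamma_i$ for the primitive class $\mu_i^{p_0}\lambda_i^{q_0}$, with $(p_0,q_0)\neq(0,0)$ (depending on $i$). For any representation $\rho$ the abelian group $\rho(\pi_1(T_i))$ is simultaneously triangularizable over $\mathbb{C}$; letting $M_i,L_i\in\mathbb{C}^{*}$ be the eigenvalues of $\rho(\mu_i),\rho(\lambda_i)$, the eigenvalue of $\rho(\mu_i^p\lambda_i^q)$ is $M_i^pL_i^q$, so $I_{\mu_i^p\lambda_i^q}=M_i^pL_i^q+M_i^{-p}L_i^{-q}$. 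Hence $I_g(\chi)=\pm2$ for every $g\in\text{Im}(\pi_1(T_i)\to\pi_1(M_L))$ if and only if $M_i^pL_i^q=\pm1$ for all $(p,q)$, equivalently $M_i,L_i\in\{\pm1\}$, i.e. iff $\rho(\pi_1(T_i))$ is $\pm$-unipotent. Thus the whole proposition reduces to showing that $M_i=L_i=\pm1$ along $V_0$ for every $i$ with $k<i\le n$.

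The key geometric input is Thurston's hyperbolic Dehn surgery deformation theory: $X_0$ is smooth of dimension $n$ at $\chi_{\rho_0}$, the logarithmic meridian holonomies $u_1,\dots,u_n$ (so that $M_i=e^{u_i/2}$) form a local holomorphic coordinate system, and for each cusp the longitude holonomy $v_i$ (with $L_i=e^{v_i/2}$) is a holomorphic function $v_i=\tau_i(u_i)$ satisfying $\tau_i(0)=0$ and $\tau_i'(0)$ equal to the cusp modulus, which is non-real. Writing $w_i:=p_0u_i+q_0v_i$, so that $M_i^{p_0}L_i^{q_0}=e^{w_i/2}$, one computes $I_{g_i}^2-4=(M_i^{p_0}L_i^{q_0}-M_i^{-p_0}L_i^{-q_0})^2=4\sinh^2(w_i/2)$; hence near $\chi_{\rho_0}$ the condition $I_{g_i}^2=4$ defines set-theoretically the locus $w_i=0$. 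The differential of $w_i$ is $(p_0+q_0\tau_i'(0))\,du_i$, which is nonzero because $\tau_i'(0)$ is non-real while $(p_0,q_0)\in\mathbb{Z}^2\setminus\{0\}$. Therefore the $n-k$ loci $\{w_i=0\}$, $k<i\le n$, are smooth with independent differentials at $\chi_{\rho_0}$, and their intersection is the smooth $k$-dimensional slice $S=\{u_{k+1}=\cdots=u_n=0\}$. As $S$ is smooth, hence locally irreducible, it is an analytic neighborhood of $\chi_{\rho_0}$ in $V_0$; when $k=0$ the slice degenerates to $S=\{\chi_{\rho_0}\}$, which forces $\dim V_0=0$ and proves the final assertion.

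On $S$ we have $u_i=0$ and hence $v_i=\tau_i(0)=0$, so $M_i=L_i=1$, for every $k<i\le n$. By the reduction in the first paragraph, every $I_g$ with $g\in\text{Im}(\pi_1(T_i)\to\pi_1(M_L))$ takes the value $2$ on $S$, a nonempty open subset of $V_0$ in the analytic topology. To conclude, fix such a $g$: the regular function $I_g^2-4$ vanishes on $S$, and since $V_0$ is irreducible its smooth locus is a connected complex manifold meeting $S$, so the identity theorem forces $I_g^2-4\equiv0$ on $V_0$, i.e. $I_g(\chi)=\pm2$ for all $\chi\in V_0$. I expect the crux to be the local independence of the defining conditions at $\chi_{\rho_0}$: it rests on $\chi_{\rho_0}$ being a smooth point of $X_0$ with the meridian holonomies as coordinates and on the non-reality of the cusp modulus, which is exactly where the hyperbolicity of $L$ enters, through Thurston's deformation theory together with Mostow-Prasad rigidity. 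The surrounding trace algebra and the spreading-out argument are then routine.
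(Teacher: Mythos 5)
The paper itself gives no argument here: its ``proof'' is the single line ``See [CS2, Proposition~2, Page~539]'', deferring entirely to Culler--Shalen. So you are not diverging from the paper so much as reconstructing the cited result, and your reconstruction runs on essentially the same fuel as Culler--Shalen's original argument, namely Thurston's deformation theory at the cusps. Your overall architecture is sound: the reduction of ``$I_g=\pm 2$ for all peripheral $g$'' to ``$M_i,L_i\in\{\pm 1\}$'' via simultaneous triangularization is correct; the local identification of the germ of $V$ at $\chi_{\rho_0}$ with a smooth $k$-dimensional slice, forcing $V_0$ to be the unique component through $\chi_{\rho_0}$ and giving $V_0=\{\chi_{\rho_0}\}$ when $k=0$, is correct; and the spreading-out step (vanishing of $I_g^2-4$ on a full-dimensional analytic germ of the irreducible $V_0$ forces identical vanishing) is routine and valid. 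Two small attributions are off but harmless: the non-reality of the cusp modulus is just the statement that a Euclidean torus has modulus in the upper half-plane (Mostow--Prasad rigidity plays no role), and the smoothness of $X(M_L)$ at $\chi_{\rho_0}$ with local dimension $n$ is a standard but nontrivial input (Thurston; see Neumann--Zagier, or Porti/Kapovich for careful treatments, including the geometric-triangulation caveat in Neumann--Zagier's setup).

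There are, however, two inaccuracies sitting exactly at the crux, both repairable. First, $v_i$ is \emph{not} a holomorphic function of $u_i$ alone: in the Neumann--Zagier parametrization each $v_i$ depends on all of $u_1,\ldots,u_n$. What is true, and what your argument genuinely needs, is Neumann--Zagier's factorization $v_i=u_i\,\tau_i(u_1,\ldots,u_n)$ with $\tau_i(0)$ the cusp shape. Note that the first-order statement $dv_i|_0=\tau_i(0)\,du_i$ alone is \emph{not} enough: it only makes $\{w_i=0\}$ tangent to $\{u_i=0\}$, so on your slice $S$ you could conclude $w_i\equiv 0$ (i.e.\ $I_{g_i}=\pm2$, the defining equation) but not $u_i\equiv 0$ and $v_i\equiv 0$, which is what gives $I_g=\pm 2$ for \emph{all} peripheral $g$ of the $i$-th cusp. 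With the factorization, $w_i=u_i\bigl(p_0+q_0\tau_i(u)\bigr)$ and $p_0+q_0\tau_i(0)\neq 0$ since $\tau_i(0)\notin\mathbb{R}$, so $\{w_i=0\}=\{u_i=0\}$ as germs and your conclusion stands. Second, the $u_i$ are not single-valued functions on $X_0$ near $\chi_{\rho_0}$: the eigenvalue of the meridian is defined only up to inversion, so the $u_i$ are coordinates on the Neumann--Zagier deformation space, which is a $2^n$-fold branched cover of the germ of $X_0$ (downstairs the functions $u_i^2$, equivalently $I_{\mu_i}\mp 2$, serve as coordinates). This is harmless for you because every function you actually use, in particular $I_{g_i}^2-4=4\sinh^2(w_i/2)$, is invariant under the sign flips $u_i\mapsto -u_i$, so the whole local computation can be run upstairs and pushed down; but the write-up should say so, since as literally stated (``the $u_i$ form a local holomorphic coordinate system'' on $X_0$, ``$v_i=\tau_i(u_i)$'') both claims are false.
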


\begin{proof}
See \cite[Proposition~2, Page~539]{CS2}.
\end{proof}

The following is a generalization of the knot case (c.f.
\cite{CS1,CS2}).

\begin{proposition}\label{prop2}
$Y_0$ is an $n$-dimensional affine variety.
\end{proposition}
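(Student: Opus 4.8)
The plan is to prove both inequalities $\dim Y_0 \le n$ and $\dim Y_0 \ge n$, the first being routine and the second being where Proposition~\ref{prop1} does the essential work. First I would record that $Y_0$ is automatically an irreducible affine variety: since $X_0$ is irreducible, its image $r(X_0)$ is irreducible, hence so is its Zariski closure $Y_0$; and $Y_0$ is a closed subset of the affine variety $X(T_1)\times\cdots\times X(T_n)$, so it is affine. For the upper bound, the image of the $n$-dimensional variety $X_0$ under the morphism $r$ has dimension at most $n$, and passing to the Zariski closure does not change dimension, so $\dim Y_0 = \dim r(X_0) \le \dim X_0 = n$.

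The content is the lower bound $\dim Y_0 \ge n$. For each $i$ choose a non-contractible simple closed curve $\gamma_i$ on $T_i$ and an element $g_i \in \mathrm{Im}(\pi_1(T_i)\to\pi_1(M_L))$ representing it, and consider the regular map
\[
  \Phi = (I_{g_1},\dots,I_{g_n}): X_0 \longrightarrow \mathbb{C}^n.
\]
The crucial observation is that each $I_{g_i}(\chi)=\chi(g_i)$ depends only on the restriction $\chi|_{\pi_1(T_i)}$, so $\Phi$ factors as $\Phi = \psi\circ (r|_{X_0})$ for a regular map $\psi: X(T_1)\times\cdots\times X(T_n)\to\mathbb{C}^n$ built from the trace coordinates on the factors $X(T_i)$. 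Since $r(X_0)\subseteq Y_0$ gives $\overline{\Phi(X_0)}\subseteq\overline{\psi(Y_0)}$, we obtain $\dim \overline{\Phi(X_0)} \le \dim \overline{\psi(Y_0)} \le \dim Y_0$, so it suffices to show that $\Phi$ is dominant, i.e. $\dim\overline{\Phi(X_0)} = n$.

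To see this I would locate the point $\chi_{\rho_0}=\chi_0 \in X_0$. Because the complete structure makes every boundary holonomy $\rho_0(g_i)$ parabolic, we have $I_{g_i}(\chi_{\rho_0}) = \pm 2$, so $\chi_{\rho_0}$ lies in the fiber $\Phi^{-1}(\Phi(\chi_{\rho_0}))$, and this fiber is contained in the set $V=\{I_{g_i}^2=4,\ 1\le i\le n\}$ of Proposition~\ref{prop1} with $k=0$. That proposition asserts the component $V_0$ of $V$ through $\chi_{\rho_0}$ is the single point $\{\chi_{\rho_0}\}$; hence the component of $\Phi^{-1}(\Phi(\chi_{\rho_0}))$ containing $\chi_{\rho_0}$ is zero-dimensional. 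Writing $Z=\overline{\Phi(X_0)}$ and applying the theorem on the dimension of fibers of the dominant morphism $\Phi: X_0\to Z$, namely that every component of every fiber has dimension at least $\dim X_0-\dim Z$, the existence of a $0$-dimensional fiber component forces $0 \ge n - \dim Z$, i.e. $\dim Z \ge n$. Combined with $\dim Z \le \dim X_0 = n$ this yields $\dim \overline{\Phi(X_0)}=n$, and therefore $\dim Y_0 \ge n$.

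The main obstacle is concentrated entirely in the lower bound, and it has already been overcome by invoking Proposition~\ref{prop1}: the genuinely hard geometric input is the rigidity statement that imposing $I_{g_i}^2=4$ for all $i$ isolates the complete character $\chi_{\rho_0}$ in its component. Once this is granted, the remaining argument is purely formal, using the fiber-dimension theorem together with the elementary remark that the curve traces $I_{g_i}$ factor through the boundary restriction $r$. A minor point to verify carefully is that $\psi$ and the trace coordinates on each $X(T_i)$ are genuinely regular so that $\Phi=\psi\circ r$ holds as a factorization of morphisms of affine varieties; this is immediate from the construction of $X(T_i)$ as the algebro-geometric quotient and the definition of $r$.
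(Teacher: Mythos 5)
Your proposal is correct and rests on exactly the same essential ingredients as the paper's proof: the rigidity assertion of Proposition \ref{prop1} with $k=0$, the observation that the peripheral trace functions $I_{g_i}$ factor through the restriction $r$, and the lower-bound theorem on fiber dimensions of a dominant morphism. The paper packages these as a proof by contradiction applied to $r$ itself (if $\dim Y_0 < n$, the component of $r^{-1}(r(\chi_0))$ through $\chi_0$ would be positive-dimensional, yet it lies in $V$ and hence in $V_0=\{\chi_0\}$), whereas you run the same count directly through the auxiliary map $\Phi=(I_{g_1},\dots,I_{g_n})$ and transfer the bound back to $Y_0$ via the factorization $\Phi=\psi\circ r$ --- a repackaging of the identical argument rather than a different route, with the minor by-product that $\Phi$ is seen to dominate $\mathbb{C}^n$.
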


\begin{proof}
It is clear that $Y_0$ is an affine variety. We need to show that
$\dim Y_0=n$. Since $\dim X_0=n$, $\dim Y_0\leq n$. Assume that
$\dim Y_0=m<n$. Then for $y\in r(X_0)$,  every component of the
fibre $r^{-1}(y)$ has dimension $\geq n-m\geq 1$. Take
$y=r(\chi_0)$, then there is an irreducible component $C$ of the
fibre $r^{-1}(y)$ containing $\chi_0$ and $\dim C\geq 1$. For each
boundary torus $T_i$ and a non-trivial $g_i\in
\text{Im}(\pi_1(T_i)\rightarrow \pi_1(M_L))$, consider the regular
function $I_{g_i}:X_0\longrightarrow \mathbb{C}$. For all $\chi\in
C$, $I_{g_i}(\chi)=I_{g_i}(\chi_0)$. Since $\chi_0$ is the character
of the complete hyperbolic structure on $M_L$,
$I^2_{g_i}(\chi)-4=I^2_{g_i}(\chi_0)-4=0$ for all $\chi\in C$,
$g_i\in \text{Im}(\pi_1(T_i)\rightarrow \pi_1(M_L))$, $1\leq i\leq
n$. Now we fix $n$ non-trivial $g_i\in
\text{Im}(\pi_1(T_i)\rightarrow \pi_1(M_L))$, $1\leq i\leq n$.
Consider the algebraic subset $V$ of $X_0$ defined by the equations
$I^2_{g_i}-4=0$, $1\leq i\leq n$. By its construction, $C$ is
contained in an irreducible component, say $V_0$, of $V$ containing
$\chi_0$. Hence $\dim V_0\geq 1$. On the other hand, by Proposition
\ref{prop1}, $V_0=\{\chi_0\}$, a contradiction. Therefore, $\dim
Y_0=n$.
\end{proof}

For every boundary torus $T_i$, fix a meridian-longitude basis
$\{\mu_i,\lambda_i\}$ for $\pi_1(T_i)=H_1(T_i;\mathbb{Z})$. Given
$1\leq i\leq n$, we define $X_0^i$ as the subvariety of $X_0$
defined by the equations $I_{\mu_j}^2-4=0$, $j\ne i$, $1\leq j\leq
n$. Let $V_i$ be an irreducible component of $X_0^i$ containing
$\chi_0$.

\begin{proposition}\label{prop3}
For each $i=1,\dots,n$, $V_i$ has dimension $1$.
\end{proposition}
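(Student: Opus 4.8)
The plan is to bound $\dim V_i$ from both sides and show each bound equals $1$. Throughout I record one elementary fact: $\chi_0\in X_0^i$. Indeed, since $\rho_0$ is the discrete faithful representation of the complete hyperbolic structure, every peripheral element is parabolic, so $I_{\mu_j}(\chi_0)=\pm 2$ and hence $I_{\mu_j}^2(\chi_0)-4=0$ for every $j$. In particular $\chi_0$ lies on every hypersurface $\{I_{\mu_j}^2-4=0\}$ that appears below.

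First I would prove $\dim V_i\ge 1$. The variety $X_0$ is irreducible of dimension $n$, and $X_0^i$ is cut out inside $X_0$ by the $n-1$ regular functions $I_{\mu_j}^2-4$ with $j\ne i$. By Krull's principal ideal theorem, every irreducible component of $X_0^i$ has dimension at least $n-(n-1)=1$. As $V_i$ is an irreducible component of $X_0^i$ containing $\chi_0$, this gives $\dim V_i\ge 1$.

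The substantive step is the reverse inequality $\dim V_i\le 1$, and the key input is Proposition \ref{prop1} in the case $k=0$, which asserts that $\chi_0$ is an isolated point of the locus $V=\{\chi\in X_0:\ I_{\mu_j}^2(\chi)=4,\ 1\le j\le n\}$ (taking $\gamma_j=\mu_j$). Suppose toward a contradiction that $\dim V_i\ge 2$, and intersect $V_i$ with the single remaining hypersurface $\{I_{\mu_i}^2-4=0\}$. If $I_{\mu_i}^2-4$ vanishes identically on $V_i$, then $V_i\subseteq V$, so the irreducible set $V_i\ni\chi_0$ lies in an irreducible component of $V$ through $\chi_0$, which by Proposition \ref{prop1} is $\{\chi_0\}$, contradicting $\dim V_i\ge 1$. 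Otherwise $I_{\mu_i}^2-4$ is a nonzero function on the irreducible $V_i$, so $V_i\cap\{I_{\mu_i}^2-4=0\}$ is a proper closed subset, each of whose components has dimension exactly $\dim V_i-1\ge 1$ by the principal ideal theorem. Choosing an irreducible component $W$ of this intersection through $\chi_0$, every point of $W$ satisfies $I_{\mu_j}^2=4$ for \emph{all} $j=1,\dots,n$, so $W\subseteq V$; being irreducible and containing $\chi_0$, $W$ lies in a single irreducible component of $V$ through $\chi_0$, which by Proposition \ref{prop1} equals $\{\chi_0\}$. This contradicts $\dim W\ge 1$. Hence $\dim V_i\le 1$, and together with the first step $\dim V_i=1$.

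The main obstacle is the upper bound, and it is resolved entirely by the rigidity statement of Proposition \ref{prop1} at the complete structure: cutting $X_0$ by all $n$ trace conditions isolates $\chi_0$, so cutting by only $n-1$ of them cannot yield a component of dimension $\ge 2$, for otherwise imposing the last condition would produce a positive-dimensional locus inside the isolated point $\chi_0$. The only points requiring care are the bookkeeping that $\chi_0$ lies on every hypersurface involved and that the final intersection genuinely passes through $\chi_0$ with the expected drop in dimension; both follow from the parabolicity of $\rho_0$ at the cusps and from Krull's theorem, so I expect no serious difficulty beyond arranging these facts in the right order.
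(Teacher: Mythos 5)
Your proof is correct and follows essentially the same route as the paper's: the lower bound $\dim V_i\ge 1$ via Krull's principal ideal theorem applied to the $n-1$ equations $I_{\mu_j}^2-4=0$, and the upper bound by assuming $\dim V_i\ge 2$, cutting with the remaining equation $I_{\mu_i}^2-4=0$, and deriving a contradiction with the $k=0$ rigidity assertion of Proposition \ref{prop1}, which isolates $\chi_0$. Your explicit case split (identical vanishing versus proper intersection) and the verification that $\chi_0$ lies on each hypersurface merely spell out details the paper leaves implicit; the substance is identical.
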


\begin{proof}
Since $X_0^i$ is defined by $n-1$ equations and $\dim X_0=n$, every
component of $X_0^i$ has dimension at least $1$. Now assume that
$\dim V_i\geq 2$. Let $U$ be the subvariety of $V_i$ defined by the
equation $I_{\mu_i}^2-4=0$ and let $U_0$ be the irreducible
component of $U$ containing $\chi_0$. Then $\dim V_i\geq 2$ implies
that $\dim U_0\geq 1$. But this contradicts the last assertion in
Proposition \ref{prop1}. Hence, $\dim V_i=1$.
\end{proof}

\begin{lemma}\label{le4}
Given a non-trivial $g_i\in \text{Im}(\pi_1(T_i)\rightarrow
\pi_1(M_L))$, $1\leq i\leq n$, then\\
(1). On every $V_j$ with $j\ne i$, we have $I_{g_i}=\pm2$ is a constant.\\
(2). On $V_i$, $I_{g_i}$ is not a constant, hence it is not a
constant on $X_0$ either.
\end{lemma}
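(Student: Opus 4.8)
The plan is to deduce both parts from Proposition \ref{prop1}, using the definitions of $V_j$ and $V_i$ as irreducible components of $X_0^j$ and $X_0^i$, together with irreducibility of these components and the dimension count in Proposition \ref{prop3}.

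For part (1) I would argue as follows. By definition $V_j$ is an irreducible component of $X_0^j=\{I_{\mu_k}^2-4=0 : k\neq j\}$ containing $\chi_0$. Because $i\neq j$, the torus $T_i$ is among those whose meridian is constrained, so Proposition \ref{prop1} (applied to the collection of simple closed curves $\{\mu_k : k\neq j\}$, after relabelling the indices if necessary) tells us that for every $\chi\in V_j$ and every $g\in\text{Im}(\pi_1(T_i)\to\pi_1(M_L))$ one has $I_g(\chi)=\pm 2$. In particular $I_{g_i}=\pm 2$ pointwise on $V_j$. Since $V_j$ is irreducible its coordinate ring is a domain, and the factorization $(I_{g_i}-2)(I_{g_i}+2)=I_{g_i}^2-4=0$ forces one of the two factors to vanish identically; hence $I_{g_i}$ is the constant $+2$ or the constant $-2$ on $V_j$.

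For part (2) I would argue by contradiction: suppose $I_{g_i}$ is constant on $V_i$. Since $\chi_0\in V_i$ and at the complete hyperbolic structure every peripheral element is parabolic, $I_{g_i}(\chi_0)=\pm 2$, so the constant value is $\pm 2$ and therefore $I_{g_i}^2-4=0$ on all of $V_i$. The element $g_i$ need not be primitive, so I would write $g_i=\gamma^c$ with $\gamma$ a primitive class (a simple closed curve in $T_i$) and $c\geq 1$. Using the Chebyshev-type identity expressing $I_{\gamma^c}=2T_c(I_\gamma/2)$ as a degree-$c$ polynomial in $I_\gamma$, the assumption that $I_{\gamma^c}$ is constant shows that $I_\gamma$ takes only finitely many values on $V_i$; as $V_i$ is irreducible and $I_\gamma$ is regular, $I_\gamma$ is itself constant, necessarily equal to its value $\pm 2$ at $\chi_0$, so $I_\gamma^2-4=0$ on $V_i$. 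Now $\gamma$ is a simple closed curve in $T_i$ and $\mu_k$ is a simple closed curve in $T_k$ for $k\neq i$, so $V_i$ is contained in the algebraic subset of $X_0$ cut out by imposing $I^2-4=0$ on exactly one simple closed curve in each of the $n$ boundary tori. Applying Proposition \ref{prop1} in the case $k=0$ to this collection, the irreducible component through $\chi_0$ is the single point $\{\chi_0\}$; since $V_i$ is irreducible and contains $\chi_0$, it must lie in that component, forcing $\dim V_i=0$. This contradicts Proposition \ref{prop3}, so $I_{g_i}$ cannot be constant on $V_i$, and a fortiori it is non-constant on $X_0\supseteq V_i$.

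The step I expect to require the most care is the passage from a non-primitive $g_i$ to its primitive root $\gamma$, since Proposition \ref{prop1} applies only to simple closed curves and the raw condition $I_{g_i}^2=4$ must first be converted into the usable condition $I_\gamma^2=4$; this is precisely what the Chebyshev-type identity accomplishes. I would also be explicit that Proposition \ref{prop1} is being invoked for an arbitrary subset of the boundary tori rather than the literal ``tail'' $k<i\leq n$ appearing in its statement, which is harmless after relabelling the tori but should be recorded.
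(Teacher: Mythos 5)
Your proof is correct and follows essentially the same route as the paper's: part (1) from Proposition \ref{prop1} together with irreducibility of $V_j$, and part (2) by contradiction, trapping $V_i$ inside an algebraic subset of $X_0$ cut out by one trace condition on each boundary torus and playing the $k=0$ case of Proposition \ref{prop1} against $\dim V_i\geq 1$. The one divergence is to your credit: the paper's proof imposes $I_{g_i}^2=4$ directly even though Proposition \ref{prop1} is stated only for simple closed curves (i.e.\ primitive peripheral elements), whereas your Chebyshev reduction $I_{\gamma^c}=2T_c(I_\gamma/2)$, combined with irreducibility of $V_i$, legitimately converts the constancy of $I_{g_i}$ into $I_\gamma^2-4\equiv 0$ for the primitive root $\gamma$, thereby filling a subtlety the paper glosses over.
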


\begin{proof}
$(1)$ follows from the definition of $V_j$ and Proposition
\ref{prop1}.

For $(2)$, suppose $I_{g_i}$ were a constant on $V_i$, then
$I_{g_i}=I_{g_i}(\chi_0)=\pm 2$. Consider the algebraic subset $V$ of
$X_0$ defined by the $n$ equations $I_{\mu_j}^2=4$, ($j\ne i$), and
$I_{g_i}^2=4$. Then $V_i$ is contained in some irreducible component
$V_0$ of $V$ containing $\chi_{\rho_0}$. Hence $\dim{V_0}\geq 1$,
contradicting Proposition \ref{prop1}.
\end{proof}

For each $i=1,\dots,n$, let $p_i$ be the projection map from
$X(T_1)\times \cdots \times X(T_n)$ to the $i$-th factor $X(T_i)$.
Denote by $r_i:X_0\longrightarrow X(T_i)$ the composite of $r$ and
$p_i$. Then we have

\begin{proposition}\label{prop5}
For every $i=1,\dots,n$, the Zariski closure $W_i$ of the image
$r_i(V_i)$ in $X(T_i)$ has dimension $1$.
\end{proposition}

\begin{proof}
It is sufficient to consider the case $i=1$. Since $\dim V_1=1$ and
$r_1$ is regular, $\dim W_1\leq 1$. Assume that $\dim W_1=0$. This
means that $r_1(V_1)$ consists of a single point. Therefore, for any
$g_1\in \text{Im}(\pi_1(T_1)\rightarrow \pi_1(M_L))$, $I_{g_1}$ is a
constant on $V_1$. This contradicts Lemma \ref{le4} part $2$.
\end{proof}

\subsection{}
For $1\leq i\leq n$, denote by $R_D(T_i)$ the subvariety of $R(T_i)$
which consists of the diagonal representations. For such a
representation $\rho$, by taking the eigenvalues of $\rho(\mu_i)$
and $\rho(\lambda_i)$, it is clear that $R_D(T_i)$ is isomorphic to
$\mathbb{C}^{*}\times \mathbb{C}^{*}$. We shall denote the
coordinates by $(l_i,m_i)$. Let $t_{i|D}$ be the restriction of
$t_i$ on $R_D(T_i)=\mathbb{C}^{*}\times \mathbb{C}^{*}$.  Set
$D_i=t_{i|D}^{-1}(W_i)$. By the proof of
\cite[Proposition~3.3]{LW1}, $D_i$ is either irreducible or has two
isomorphic irreducible components. Let $y^i\in D_i$ be the point
corresponding to the character of the representation of the
hyperbolic structure on $S^3-L$. Let $Y_i$ be an irreducible
component of $D_i$ containing $y^i$. Then $Y_i$ is an algebraic
curve. Denote by $\overline{Y_i}$ the smooth projective model of
$Y_i$. Denote $\mathbb{C}(\overline{Y_i})$ the function field of
$\overline{Y_i}$ which is isomorphic to the function field
$\mathbb{C}(Y_i)$ of $Y_i$. Note that when $L$ is a hyperbolic knot
($n=1$), $Y_1$ is the locus of the factor of the $A$-polynomial
corresponding to the geometric component.

We define $Y^h=\overline{Y_1}\times \overline{Y_2}\times \cdots \times
\overline{Y_n}$. Note that it is an $n$-dimensional smooth projective variety.
Let $\mathbb{C}(Y^h)$ be the function field of $Y^h$. For each $i$,
we have the injective morphism
$j_i:\mathbb{C}(Y_i)=\mathbb{C}(\overline{Y_i})\rightarrow
\mathbb{C}(Y^h)$ which is induced by the $i$-th projection from
$Y^h$ to $\overline{Y_i}$. In this way we shall take the
$\mathbb{C}(Y_i)$ as subfields of $\mathbb{C}(Y^h)$.  This also
induces the map $j$ on the K-groups:
\[
  j: \bigoplus_{i=1}^{n}K_2(\mathbb{C}(Y_i))\rightarrow
  K_2(\mathbb{C}(Y^h)).
\]
For $f_i, g_i\in \mathbb{C}(Y_i)$, $i=1,\dots,n$,
$j(\sum_{i=1}^{n}\{f_i,g_i\})= \prod_{i=1}^{n}\{f_i,g_i\}$, where we
identify $f_i, g_i$ as rational functions on $Y^h$ via the injection
$j_i$. Note that in this paper we shall use the multiplication in $K_2$
instead of addition.

\begin{proposition}\label{prop6}
There exists a finite field extension $F$ of $\mathbb{C}(Y^h)$ with
the property that for every $i=1,\dots,n$, there is a representation
\[
  P_i: \pi_1(M_L)\longrightarrow SL_2(F)
\]
such that for $1\leq j\leq n$, if $j\ne i$, the traces of
$P_i(\lambda_j)$ and $P_i(\mu_j)$ are either $2$ or $-2$. If $j=i$,
then
\begin{equation*}
  P_i(\lambda_i)=
  \left[
  \begin{matrix}
    l_i & 0\\
    0 & l^{-1}_i
  \end{matrix}
  \right] \, \text{and} \, \,
P_i(\mu_i)=
  \left[
  \begin{matrix}
    m_i & 0\\
    0 & m^{-1}_i
  \end{matrix}
  \right].
\end{equation*}

\end{proposition}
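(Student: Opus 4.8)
The plan is to produce, over the generic point of each curve $V_i$, a tautological representation of $\pi_1(M_L)$, then to normalize it along the $i$-th boundary torus so that $\mu_i$ and $\lambda_i$ become diagonal, and finally to assemble the $n$ representations over a single finite extension of $\mathbb{C}(Y^h)$.

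First I would recall the tautological construction from Culler--Shalen theory. Since $\chi_{\rho_0}\in V_i$ is the character of an irreducible representation and $V_i$ is an irreducible curve (Proposition \ref{prop3}), the representation attached to the generic point of $V_i$ is irreducible. Pulling $V_i$ back under $t:R_0\to X_0$ and selecting a subvariety that maps dominantly onto $V_i$ after the conjugation gauge is fixed, the universal representation on $R_0$ restricts to a homomorphism $\rho_i:\pi_1(M_L)\to SL_2(K_i)$, where $K_i$ is a finite extension of $\mathbb{C}(V_i)$, whose character is the tautological character of $V_i$.

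The second step is the normalization along $T_i$. By Lemma \ref{le4}(2) the trace $I_{\mu_i}$ is non-constant on $V_i$, so as an element of $K_i$ it is not equal to $\pm2$; hence $\rho_i(\mu_i)$ has two distinct eigenvalues $m_i,m_i^{-1}$, the roots of $x^2-I_{\mu_i}x+1$, which lie in a quadratic extension of $K_i$. I would conjugate by the corresponding change of basis, defined over this extension, so that $\rho_i(\mu_i)=\mathrm{diag}(m_i,m_i^{-1})$. Because $\lambda_i$ commutes with $\mu_i$ and $\rho_i(\mu_i)$ is now regular semisimple, $\rho_i(\lambda_i)$ commutes with a regular diagonal matrix and is therefore automatically diagonal, $\rho_i(\lambda_i)=\mathrm{diag}(l_i,l_i^{-1})$. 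The eigenvalue functions $(l_i,m_i)$ are precisely the coordinates on $R_D(T_i)=\mathbb{C}^*\times\mathbb{C}^*$; by their construction they factor through the restriction $r_i:V_i\to W_i$ (Proposition \ref{prop5}) and, along the branch containing the hyperbolic point $y^i$, they land in the component $Y_i$. This yields a dominant morphism from the normalized $V_i$-curve to $Y_i$, so the field $\tilde K_i$ over which $P_i:=\rho_i$ is now defined is a finite extension of $\mathbb{C}(Y_i)$ and in particular contains $l_i,m_i$. Meanwhile, for $j\ne i$ the defining equations of $V_i\subset X_0^i$ together with Proposition \ref{prop1} force $I_{\mu_j}=\pm2$ and $I_{\lambda_j}=\pm2$ on $V_i$; since these are conjugation-invariant traces, the same values persist for $P_i(\mu_j)$ and $P_i(\lambda_j)$, giving the required trace conditions.

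Finally I would assemble everything over $\mathbb{C}(Y^h)$. Each $\tilde K_i$ is finite over $\mathbb{C}(Y_i)$, and $\mathbb{C}(Y_i)$ embeds into $\mathbb{C}(Y^h)$ via the $i$-th projection $j_i$ (Definition \ref{def2.6}). Forming the compositum $F_i=\tilde K_i\cdot\mathbb{C}(Y^h)$ inside a fixed algebraic closure makes $F_i$ finite over $\mathbb{C}(Y^h)$, and then $F=F_1\cdots F_n$ is a finite extension of $\mathbb{C}(Y^h)$ over which all $n$ representations $P_i$ are simultaneously defined. The main obstacle is the second step: one must ensure that the tautological representation really descends to a finite extension of the function field, and that after diagonalizing along $T_i$ its field of definition is finite over $\mathbb{C}(Y_i)$ rather than merely over $\mathbb{C}(V_i)$, since the latter does not sit inside $\mathbb{C}(Y^h)$. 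This hinges on matching the eigenvalue curve $Y_i$ with the character curve $V_i$ through $W_i$, and on the non-constancy of $I_{\mu_i}$ from Lemma \ref{le4}, which is exactly what makes the simultaneous diagonalization of $\mu_i$ and $\lambda_i$ possible.
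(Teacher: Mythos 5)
Your proposal is correct and follows essentially the same route as the paper: both take a tautological Culler--Shalen representation over (a finite extension of) the function field of a curve in $R_0$ dominating $V_i$, use Lemma \ref{le4} and Proposition \ref{prop1} for the trace conditions at the tori $T_j$, $j\ne i$, diagonalize the commuting pair $\bigl(P_i(\mu_i),P_i(\lambda_i)\bigr)$ with eigenvalues matched to the coordinates $(l_i,m_i)$ on $Y_i$ through the common quotient $W_i$, and finish by forming composita inside a fixed algebraic closure of $\mathbb{C}(Y^h)$. The only cosmetic difference is that the paper gets $l_i,m_i$ into the field of definition by taking the composite of $\mathbb{C}(D_i)$ and $\mathbb{C}(Y_i)$ over $\mathbb{C}(W_i)$ up front, while you adjoin eigenvalues and then identify the eigenvalue map as dominating $Y_i$ (choosing the eigenvalue ordering, i.e.\ conjugating by the Weyl element if needed, to land in the component $Y_i$ rather than its inverse-isomorphic twin) --- the same matching idea, which you correctly flag as the crux.
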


\begin{proof}
By definition, for each $i$, $W_i$ is the Zariski closure of
$r_i(V_i)$ in $X(T_i)$ and $Y_i$ is mapped dominatingly to $W_i$.
The canonical morphism $t: R_0\rightarrow X_0$ is surjective, so we
can choose a curve $E_i\subset R_0$ such that $t(E_i)$ is dense in
$V_i$. Hence $r_i\circ t:E_i\rightarrow W_i$ is dominating. Then the
function fields $\mathbb{C}(E_i)$ and $\mathbb{C}(Y_i)$ are finite
extensions of $\mathbb{C}(W_i)$. By \cite[Page~115]{CS1}, there is a
tautological representation $p_i: \pi_1(M_L)\rightarrow
SL_2(\mathbb{C}(E_i))$, and for any $g\in \pi_1(M_L)$ the trace of
$p_i(g)$ equals $I_g$. Let $F_i$ be the composite field of
$\mathbb{C}(E_i)$ and $\mathbb{C}(Y_i)$. It is finite over both
$\mathbb{C}(E_i)$ and $\mathbb{C}(Y_i)$. We shall view $p_i$ as a
representation in $SL_2(F_i)$. Since $t(E_i)$ is dense in $V_i$, by
Lemma \ref{le4}, if $j\ne i$, traces of $p_i(\lambda_j)$ and
$p_i(\mu_j)$ are $\pm2$; if $j=i$, traces of $p_i(\lambda_i)$ and
$p_i(\mu_i)$ are non-constant functions on $E_i$.  Since
$p_i(\lambda_i)$ and $p_i(\mu_i)$ are commuting and their
eigenvalues $l_i$, $m_i$ are in $F_i$, the representation $p_i$ is
conjugate in $GL_2(F_i)$ to a representation
\[
  P_i: \pi_1(M_L)\longrightarrow SL_2(F_i)
\]
such that if $j\ne i$, the traces of $P_i(\lambda_j)$ and
$P_i(\mu_j)$ are either $2$ or $-2$. If $j=i$, then
\begin{equation*}
P_i(\lambda_i)=
  \left[
  \begin{matrix}
    l_i & 0\\
    0 & l^{-1}_i
  \end{matrix}
  \right] \, \text{and} \, \,
P_i(\mu_i)=
  \left[
  \begin{matrix}
    m_i & 0\\
    0 & m^{-1}_i
  \end{matrix}
  \right].
\end{equation*}

Fix an algebraic closure $\overline{\mathbb{C}(Y^h)}$ of
$\mathbb{C}(Y^h)$. As above, by viewing $\mathbb{C}(Y_i)$ as a
subfield of $\mathbb{C}(Y^h)$, we can identify the finite field
extension $F_i$ as a subfield of $\overline{\mathbb{C}(Y^h)}$. In
$\overline{\mathbb{C}(Y^h)}$, take the composite of $F_i$ and
$\mathbb{C}(Y^h)$ over $\mathbb{C}(Y_i)$, denoted it by $K_i$. Then
$F_i\subset K_i$ and $K_i$ is a finite extension of
$\mathbb{C}(Y^h)$ because the extension $F_i/\mathbb{C}(Y_i)$ is
finite. Now let $F$ be the composite of the fields $K_1,\cdots, K_n$
in $\overline{\mathbb{C}(Y^h)}$. Then $F$ is a finite extension of
$\mathbb{C}(Y^h)$ since each $K_i$ is. Now compose each $P_i$ with
the embedding $SL_2(F_i)\hookrightarrow SL_2(F)$ and the proposition
follows.
\end{proof}

\section{K-theory and Deligne Cohomology}
First we shall recall the basic definitions of $K_2$ of a
commutative ring $A$. The reference is \cite{Mil}. Let $GL(A)$ be
the direct limit of the groups $GL_n(A)$, and let $E(A)$ be the
direct limit of the groups $E_n(A)$ generated by all $n\times n$
elementary matrices.

\begin{definition}
 For $n\geq 3$, the Steinberg group $St(n,A)$ is the group defined
 by generators $x_{ij}^{\lambda}$, $1\leq i\ne j\leq
 n$, $\lambda\in A$, subject to the following three relations:\\

 (i) $x_{ij}^{\lambda}\cdot x_{ij}^{\mu}=x_{ij}^{\lambda+\mu}$;

 (ii) $[x_{ij}^{\lambda},x_{jl}^{\mu}]=x_{il}^{\lambda\mu}$, for $i\ne l$;

 (iii) $[x_{ij}^{\lambda},x_{kl}^{\mu}]=1$, for $j\ne k$, $i\ne l$.
\end{definition}

We have the canonical homomorphism $\phi_n: St(n,A)\rightarrow
GL_n(A)$ by $\phi(x_{ij}^{\lambda})=e_{ij}^{\lambda}$, where
$e_{ij}^{\lambda}\in GL_{n}(A)$ is the elementary matrix with entry
$\lambda$ in the $(i,j)$ place. Taking the direct limit as
$n\rightarrow \infty$, we get
\[
  \phi:St(A)\rightarrow GL(A).
\]
Its image $\phi(St(A))$ is equal to $E(A)$, the commutator subgroup
of $GL(A)$.
\begin{definition}
 $K_2(A)=\text{Ker}\,\phi$.
\end{definition}

It is well-known that $K_2(A)$ is the center of the Steinberg group
$St(A)$ (See \cite[Theorem~5.1]{Mil}) and there is a canonical
isomorphism $\alpha: H_2(E(A);\mathbb{Z})\rightarrow K_2(A)$ (See
\cite[Theorem~5.10]{Mil}).

\subsection{The Symbol} Let $U$,$V$ be two commuting elements of
$E(A)$. Choose $u,v\in St(A)$ such that $U=\phi(u)$ and $V=\phi(v)$.
Then the commutator $[u,v]=uvu^{-1}v^{-1}$ is in the kernel of
$\phi$. Hence $[u,v]\in K_2(A)$. We can check that it is independent of
the choices of $u$ and $v$, and denote it by $U\bigstar V$.

\begin{lemma}\label{le2.3}
 (1). The construction is skew-symmetric: $U\bigstar V=(V\bigstar
 U)^{-1}$.\\
 (2). It is bi-multiplicative: $(U_1\cdot U_2)\bigstar V=(U_1\bigstar V)\cdot (U_2\bigstar
 V)$.\\
 (3). It is invariant under conjugation: if $P\in GL(A)$, then
 $(PUP^{-1})\bigstar (PVP^{-1})=U\bigstar V$.
\end{lemma}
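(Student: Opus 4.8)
The plan is to treat the three assertions in increasing order of difficulty, exploiting throughout the two structural facts already recorded: that $K_2(A)=\ker\phi$ is the \emph{center} of $St(A)$, and that elements of $E(A)$ lift (non-uniquely) to $St(A)$ with any two lifts differing by a central factor. The latter point is exactly what makes $U\bigstar V=[u,v]$ independent of the chosen lifts $u,v$, and it is the only input needed to settle the well-definedness already asserted in the text.

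For part (1) I would argue purely formally from the definition of the commutator. Lifting $U,V$ to $u,v\in St(A)$, one has $V\bigstar U=[v,u]=vuv^{-1}u^{-1}=(uvu^{-1}v^{-1})^{-1}=[u,v]^{-1}=(U\bigstar V)^{-1}$, so skew-symmetry is immediate and uses nothing beyond the inverse of a commutator. For part (2), fix lifts $u_1,u_2,v$ of $U_1,U_2,V$; since $\phi(u_1u_2)=U_1U_2$, the product $u_1u_2$ is a legitimate lift of $U_1U_2$. The plan is to invoke the standard commutator identity $[u_1u_2,v]=u_1[u_2,v]u_1^{-1}\,[u_1,v]$. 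Because $[u_2,v]=U_2\bigstar V$ lies in $K_2(A)$, which is central in $St(A)$, the conjugation $u_1[u_2,v]u_1^{-1}$ is trivial, and one obtains $(U_1U_2)\bigstar V=(U_2\bigstar V)(U_1\bigstar V)=(U_1\bigstar V)(U_2\bigstar V)$, the last equality holding since $K_2(A)$ is abelian. Thus bi-multiplicativity reduces to this identity together with centrality.

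Part (3) is where the real work lies, and I expect it to be the main obstacle. The difficulty is that a general $P\in GL(A)$ need not lie in $E(A)$ and hence has \emph{no lift} to $St(A)$, so we cannot directly write $PUP^{-1}$ as $\phi$ of a conjugate of $u$. I would proceed in two steps. First, dispose of the case $P\in E(A)$: choose a lift $p\in St(A)$ of $P$; then $pup^{-1}$ and $pvp^{-1}$ are lifts of $PUP^{-1}$ and $PVP^{-1}$ (note $E(A)$ is normal in $GL(A)$, so these conjugates do lie in $E(A)$), whence $(PUP^{-1})\bigstar(PVP^{-1})=[pup^{-1},pvp^{-1}]=p[u,v]p^{-1}=[u,v]=U\bigstar V$, the third equality again using that $[u,v]$ is central.

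It remains to reduce a general $P\in GL(A)$ to the case $P\in E(A)$, and this is the crux. The device is Whitehead's lemma: if $U,V\in E_n(A)$, set $Q=\mathrm{diag}(P,P^{-1})\in GL_{2n}(A)$, which in fact lies in $E_{2n}(A)\subseteq E(A)$. Under the stabilization identifications $U\leftrightarrow\mathrm{diag}(U,I_n)$ and $PUP^{-1}\leftrightarrow\mathrm{diag}(PUP^{-1},I_n)$, a direct block computation gives $Q\,\mathrm{diag}(U,I_n)\,Q^{-1}=\mathrm{diag}(PUP^{-1},I_n)$, and likewise for $V$. Hence $PUP^{-1}=QUQ^{-1}$ and $PVP^{-1}=QVQ^{-1}$ with the \emph{same} $Q\in E(A)$, so the first step applies and yields $(PUP^{-1})\bigstar(PVP^{-1})=(QUQ^{-1})\bigstar(QVQ^{-1})=U\bigstar V$. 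In short, once the passage from $GL(A)$ to $E(A)$ is supplied by Whitehead's lemma, conjugation invariance follows from the same centrality of $K_2(A)$ that drives parts (1) and (2).
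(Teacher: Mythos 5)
Your proposal is correct and matches the paper's argument: for part (3) — the only part the paper proves in detail rather than citing \cite[Lemma~8.1]{Mil} — the paper uses exactly your Whitehead-lemma device, lifting $\mathrm{diag}(P,P^{-1})\in E(A)$ to some $p\in St(A)$ so that $\phi(pup^{-1})=PUP^{-1}$, $\phi(pvp^{-1})=PVP^{-1}$, and concluding via the centrality of $[u,v]\in K_2(A)$ in $St(A)$. Your proofs of (1) and (2), which the paper delegates to Milnor, are the standard ones (commutator inverse, and the identity $[u_1u_2,v]=u_1[u_2,v]u_1^{-1}[u_1,v]$ with $[u_2,v]$ central), so the whole proposal is sound and essentially identical in route.
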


\begin{proof}
 This is \cite[Lemma~8.1]{Mil}. For (3), we remark that since $E(A)$
 is a normal subgroup of $GL(A)$, the left-hand side of the formula
 makes sense. If $P$,$U$,$V$ are in $GL(n,A)$, then choose $p\in St(A)$
 such that
 \[
   \phi(p)=\left[
   \begin{matrix}

  P    & 0         \\
  0    &P^{-1}

  \end{matrix}
  \right]\in E(A).
 \]
 Now we have $\phi(pup^{-1})=PUP^{-1}$ and $\phi(pvp^{-1})=PVP^{-1}$.
 Hence,
 \[
   [pup^{-1},pvp^{-1}]=p[u,v]p^{-1}=[u,v].
 \]

\end{proof}

Given two units $f$, $g$  of $A$, consider the matrices:
\[
 D_f=\left[
  \begin{matrix}

  f    & 0     &0   \\
  0    &f^{-1} &0   \\
  0    & 0     &1

\end{matrix}
\right] ; \;\;
 D_g^{\prime}=\left[
  \begin{matrix}

  g    & 0     &0   \\
  0    &1      &0   \\
  0    & 0     &g^{-1}

\end{matrix}
\right].
\]
They are in $E(A)$ and commute. Define the symbol
$\{f,g\}:=D_f\bigstar D_g^{\prime}$.

\begin{lemma}\label{le2.4}
(1). The symbol $\{f,g\}$ is skew-symmetric:
$\{f,g\}=\{g,f\}^{-1}$.\\
(2). It is bi-multiplicative: $\{f_1f_2,g\}=\{f_1,g\}\{f_2,g\}$.\\
(3). Denote $diag(f_1,\dots,f_n)$ the diagonal matrix with diagonal
entries $f_1,\dots,f_n$. If $f_1\cdots f_n=g_1\cdots g_n=1$, then
 \[
   diag(f_1,\dots,f_n)\bigstar diag(g_1,\dots,g_n)=\{f_1,g_1\}\{f_2,g_2\}\cdots
   \{f_n,g_n\}.
 \]
 where the right-hand side means the product of the symbols
 $\{f_i,g_i\}$, $1\leq i\leq n$.
\end{lemma}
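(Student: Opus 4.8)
The plan is to deduce all three parts from Lemma \ref{le2.3} together with the definition $\{f,g\}=D_f\bigstar D_g'$. Throughout it is convenient to write $h_{ij}(a)$ for the diagonal matrix with $a$ in the $i$-th slot, $a^{-1}$ in the $j$-th slot, and $1$ elsewhere, so that $D_f=h_{12}(f)$, $D_g'=h_{13}(g)$, and $\{f,g\}=h_{12}(f)\bigstar h_{13}(g)$. For part (1), skew-symmetry of $\bigstar$ (Lemma \ref{le2.3}(1)) gives $\{f,g\}=(h_{13}(g)\bigstar h_{12}(f))^{-1}$; conjugating by the permutation matrix $\sigma\in GL(A)$ that interchanges the $2$nd and $3$rd coordinates (legitimate by Lemma \ref{le2.3}(3), since $\sigma\in GL(A)$ and $E(A)$ is normal in $GL(A)$) sends $h_{13}(g)\mapsto h_{12}(g)$ and $h_{12}(f)\mapsto h_{13}(f)$, whence $h_{13}(g)\bigstar h_{12}(f)=\{g,f\}$ and $\{f,g\}=\{g,f\}^{-1}$. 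For part (2), the matrices are diagonal so $D_{f_1f_2}=D_{f_1}D_{f_2}$, and bi-multiplicativity of $\bigstar$ (Lemma \ref{le2.3}(2)) gives $\{f_1f_2,g\}=(D_{f_1}\bigstar D_g')(D_{f_2}\bigstar D_g')=\{f_1,g\}\{f_2,g\}$.

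Part (3) is the substantive one, and I would first isolate two facts about the elementary diagonal symbols. \emph{Disjoint supports:} if $\{i,j\}\cap\{k,l\}=\emptyset$ then $h_{ij}(a)\bigstar h_{kl}(b)=1$, because the standard Steinberg lifts of $h_{ij}(a)$ and $h_{kl}(b)$ are words in the generators $x^{\bullet}_{ij},x^{\bullet}_{ji}$ and $x^{\bullet}_{kl},x^{\bullet}_{lk}$ respectively, which commute by relation (iii), so the commutator computing the symbol is trivial. \emph{Change of position:} for distinct $i,j,k$, conjugating by the permutation sending $(1,2,3)\mapsto(i,j,k)$ and using Lemma \ref{le2.3}(3) gives $h_{ij}(a)\bigstar h_{ik}(b)=h_{12}(a)\bigstar h_{13}(b)=\{a,b\}$. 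Combining these with $h_{ij}(a)=h_{ji}(a^{-1})$ and bi-multiplicativity yields the two local values I will need: $h_{ij}(a)\bigstar h_{jk}(b)=\{a,b\}^{-1}$ (rewrite the first factor as $h_{ji}(a^{-1})$), and $h_{ij}(a)\bigstar h_{ij}(b)=\{a,b\}^{2}$ (split $h_{ij}(a)=h_{il}(a)\,h_{lj}(a)$ across a fresh coordinate $l$ and apply the preceding identity twice).

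With these in hand I would prove (3) by induction on $n$. The hypotheses $\prod f_i=\prod g_i=1$ enter through the auxiliary formula $\text{diag}(a_1,\dots,a_m)\bigstar h_{m,m+1}(c)=\{a_m,c\}$ valid whenever $\prod_{i=1}^{m}a_i=1$: decomposing the left factor by the staircase $\prod_{k=1}^{m-1}h_{k,k+1}(a_1\cdots a_k)$, all terms but $k=m-1$ vanish by disjointness, and $a_1\cdots a_{m-1}=a_m^{-1}$ together with $h_{m-1,m}(\cdot)\bigstar h_{m,m+1}(\cdot)=\{\cdot,\cdot\}^{-1}$ gives the claim. For the inductive step I write $\text{diag}(f_1,\dots,f_n)=\text{diag}(f_1,\dots,f_{n-2},f_{n-1}f_n)\cdot h_{n-1,n}(f_n^{-1})$ and likewise for $g$, expand $\text{diag}(f)\bigstar\text{diag}(g)$ into four factors by bi-multiplicativity, evaluate three of them by the auxiliary formula and by $h_{n-1,n}(f_n^{-1})\bigstar h_{n-1,n}(g_n^{-1})=\{f_n,g_n\}^{2}$, and apply the inductive hypothesis to the fourth factor $\text{diag}(f_1,\dots,f_{n-2},f_{n-1}f_n)\bigstar\text{diag}(g_1,\dots,g_{n-2},g_{n-1}g_n)$. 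Expanding $\{f_{n-1}f_n,g_{n-1}g_n\}$ then shows that the mixed terms $\{f_{n-1},g_n\}^{\pm1}$ and $\{f_n,g_{n-1}\}^{\pm1}$ cancel, leaving $\prod_{i=1}^{n}\{f_i,g_i\}$; the base case $n=1$ is trivial since $f_1=1$.

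The routine steps are (1), (2) and the permutation/disjointness reductions. The main obstacle is twofold: correctly pinning down the local symbols — in particular the factor $\{a,b\}^{2}$ in $h_{ij}(a)\bigstar h_{ij}(b)$ and the inverse in the adjacent-index case — and arranging the induction so the cross terms arising from $\{f_{n-1}f_n,g_{n-1}g_n\}$ are exactly cancelled by the mixed boundary contributions. This cancellation is precisely where the constraints $\prod f_i=\prod g_i=1$ are indispensable, since they force $\{1,\cdot\}=1$ and the identifications $a_1\cdots a_{m-1}=a_m^{-1}$ that drive the bookkeeping.
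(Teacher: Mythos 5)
Your proof is correct, but note that the paper gives no argument of its own here: its ``proof'' is the citation \cite[Lemmas 8.2, 8.3]{Mil}. Your parts (1) and (2) coincide with the standard cited argument (permutation conjugation via Lemma \ref{le2.3}(3), and $D_{f_1f_2}=D_{f_1}D_{f_2}$ plus Lemma \ref{le2.3}(2)). For part (3) your route genuinely differs from Milnor's, and I verified it goes through: the disjoint-support fact is sound because the standard lifts $w_{ij}(a)w_{ij}(-1)$ of $h_{ij}(a)$ involve only the generators $x^{\bullet}_{ij},x^{\bullet}_{ji}$, which commute with those at a disjoint pair $\{k,l\}$ by relation (iii); your local values $h_{ij}(a)\bigstar h_{jk}(b)=\{a,b\}^{-1}$ and $h_{ij}(a)\bigstar h_{ij}(b)=\{a,b\}^{2}$ are right (the latter is consistent with what underlies Corollary \ref{cor2.7}(1), and you derive it non-circularly from position-change rather than from part (3)); the staircase $\mathrm{diag}(a_1,\dots,a_m)=\prod_{k=1}^{m-1}h_{k,k+1}(a_1\cdots a_k)$ indeed requires $\prod a_i=1$; and in the inductive step the exponent of $\{f_n,g_n\}$ comes out to $1-1-1+2=1$ while $\{f_{n-1},g_n\}$ and $\{f_n,g_{n-1}\}$ cancel against the expansion of $\{f_{n-1}f_n,g_{n-1}g_n\}$, using that $K_2(A)$ is abelian. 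The comparison with the cited proof is instructive: Milnor splits \emph{both} diagonal matrices at once over two fresh coordinates, $\mathrm{diag}(f_1,\dots,f_n)=\prod_{i=1}^{n}h_{i,n+1}(f_i)$ and $\mathrm{diag}(g_1,\dots,g_n)=\prod_{j=1}^{n}h_{j,n+2}(g_j)$ (the hypotheses $\prod f_i=\prod g_j=1$ are exactly what make the spare slots equal $1$); then bimultiplicativity together with your disjointness fact kills every cross term $i\ne j$ outright, and each diagonal term equals $\{f_i,g_i\}$ by your change-of-position fact, so no induction, no inverse or squared local values, and no cancellation bookkeeping are needed. Your argument is longer but complete and correct as written; the two-spare-coordinate decomposition is the shortcut you were missing.
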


\begin{proof}
 \cite[Lemma 8.2~ Lemma 8.3]{Mil}.
\end{proof}

Let $F$ be a field. Let $SL(F)$ be the direct limit of the groups
$SL_n(F)$. We know that $SL(F)=E(F)$ and any element of $SL_n(F)$ is
also naturally an element of $E(F)$.

\begin{lemma}\label{le2.5}
 Let $u,t\in F$, then\\

$(1). \; \left[
   \begin{matrix}

  1    & t         \\
  0    & 1

  \end{matrix}
  \right]\bigstar
  \left[
   \begin{matrix}

  1    & u         \\
  0    & 1

  \end{matrix}
  \right]=1.
 $\\

 $(2). \;
   \left[
   \begin{matrix}

  -1    & t         \\
  0    & -1

  \end{matrix}
  \right]\bigstar
  \left[
   \begin{matrix}

  1    & u         \\
  0    & 1

  \end{matrix}
  \right] , \,
   \left[
   \begin{matrix}

  1    & t         \\
  0    & 1

  \end{matrix}
  \right]\bigstar
  \left[
   \begin{matrix}

  -1    & u         \\
  0    & -1

  \end{matrix}
   \right] \; \text{and}
   \left[
   \begin{matrix}

  -1    & t         \\
  0    & -1

  \end{matrix}
  \right]\bigstar
  \left[
   \begin{matrix}

  -1    & u         \\
  0    & -1

  \end{matrix}
  \right]
 $  are 2-torsion in $K_2(F)$.\\

 (3). If $U$ and $V$ are two commuting matrices in $SL_2(F)$ and their
 traces are $2$ or $-2$, then $U\bigstar V$ is  $2$-torsion in $K_2(F)$. In
 particular if both have trace $2$, then $U\bigstar V=1$.
\end{lemma}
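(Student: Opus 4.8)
The plan is to prove the three parts in sequence, using the earlier properties of the symbol $\bigstar$ as the main tools and reducing part (3) to parts (1) and (2) by a simultaneous triangularization.

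For part (1), I would lift the two unipotent matrices $\left[\begin{matrix}1&t\\0&1\end{matrix}\right]$ and $\left[\begin{matrix}1&u\\0&1\end{matrix}\right]$, viewed as the elementary matrices $e_{12}^{t}$ and $e_{12}^{u}$, to their canonical Steinberg generators $x_{12}^{t}$ and $x_{12}^{u}$ in $St(F)$. By the Steinberg relation (i), $x_{12}^{t}x_{12}^{u}=x_{12}^{t+u}=x_{12}^{u}x_{12}^{t}$, so these particular lifts commute and their commutator is trivial. Since $U\bigstar V$ is independent of the chosen lifts, this gives the asserted equality $\left[\begin{matrix}1&t\\0&1\end{matrix}\right]\bigstar\left[\begin{matrix}1&u\\0&1\end{matrix}\right]=1$.

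The next key observation, which drives part (2) and the central cases of part (3), is that $(-I)\bigstar V$ is a $2$-torsion for every $V$ commuting with $-I$. Indeed $-I$ is central, so it commutes with every $V$; since $I\bigstar V=(I\cdot I)\bigstar V=(I\bigstar V)^{2}$ forces $I\bigstar V=1$, bi-multiplicativity (Lemma \ref{le2.3}(2)) yields $\big((-I)\bigstar V\big)^{2}=\big((-I)(-I)\big)\bigstar V=I\bigstar V=1$. For part (2) I would then factor $\left[\begin{matrix}-1&t\\0&-1\end{matrix}\right]=(-I)\left[\begin{matrix}1&-t\\0&1\end{matrix}\right]$ and expand each symbol by bi-multiplicativity in both arguments. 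Every resulting factor is either a unipotent--unipotent symbol (trivial by part (1)), a factor of the form $(-I)\bigstar(\text{unipotent})$ or $(-I)\bigstar(-I)$ (a $2$-torsion by the observation above), or the inverse of such a factor (handled by the skew-symmetry of Lemma \ref{le2.3}(1)). Because $K_2(F)$ is abelian, a product of $2$-torsions is again a $2$-torsion, and the three displayed symbols in part (2) follow; the middle one in particular is obtained from the first by skew-symmetry.

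For part (3), I would exploit conjugation invariance, $U\bigstar V=(PUP^{-1})\bigstar(PVP^{-1})$ for $P\in GL_2(F)$ (Lemma \ref{le2.3}(3)). Since $\det U=\det V=1$ and $\operatorname{tr}U,\operatorname{tr}V\in\{2,-2\}$, the characteristic polynomial of each is $(x\mp1)^{2}$, so each of $U,V$ is either $\pm I$ or a non-central matrix with eigenvalue $\pm1\in F$ and a single Jordan block. When $U=\pm I$ the claim is immediate ($I\bigstar V=1$, while $(-I)\bigstar V$ is $2$-torsion). Otherwise $U$ has a $1$-dimensional eigenspace defined over $F$, and since $V$ commutes with $U$ it preserves this line; hence a single $P\in GL_2(F)$ simultaneously conjugates $U$ and $V$ into the upper triangular forms $\left[\begin{matrix}\epsilon&*\\0&\epsilon\end{matrix}\right]$ and $\left[\begin{matrix}\delta&*\\0&\delta\end{matrix}\right]$ with $\epsilon,\delta\in\{1,-1\}$. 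Applying parts (1) and (2) to these triangular representatives finishes the argument; when both traces equal $2$ we get $\epsilon=\delta=1$ and part (1) gives $U\bigstar V=1$. I expect the main technical point to be precisely this simultaneous triangularization over the possibly non-algebraically-closed field $F$: one must use the trace condition to place the repeated eigenvalue in $F$ and the commutativity of $U$ and $V$ to produce a common eigenvector, so that a single conjugating $P\in GL_2(F)$ suffices. Once the reduction to the $2\times2$ normal forms is in place, everything collapses to the elementary cases already established.
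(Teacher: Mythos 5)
Your proposal is correct and follows essentially the same route as the paper: part (1) via the Steinberg relation $x_{12}^{t}x_{12}^{u}=x_{12}^{t+u}$, part (2) by factoring $M(-1,t)=M(-1,0)\cdot M(1,-t)$ and squaring $(-I)\bigstar M(1,s)$ through bi-multiplicativity, and part (3) by simultaneous upper-triangularization over $F$ combined with conjugation invariance (Lemma \ref{le2.3}(3)). Your only addition is to spell out why the simultaneous triangularization works over a possibly non-algebraically-closed $F$ (repeated eigenvalue $\pm1\in F$, common eigenvector from commutativity), a detail the paper's proof asserts without justification.
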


\begin{proof}
We shall use the following notations. For $s\in F$,
\[
  M(1,s)=\left[
   \begin{matrix}

  1    & s         \\
  0    & 1

  \end{matrix}
  \right]\;\; \text{and} \;\;
  M(-1,s)=\left[
   \begin{matrix}

  -1    & s         \\
  0    & -1

  \end{matrix}
  \right].
\]
In particular, $M(1,0)$ is the $2\times 2$ identity matrix and
$M(-1,0)$ is the $2\times 2$ diagonal matrix with diagonal entries
$-1$.

For $(1)$, $M(1,t)\bigstar M(1,u)=[x_{12}^{t},x_{12}^{u}]=1$ by the
definition of $St(A)$.

For $(2)$, notice that by the definition, $M(1,0)\bigstar A=1$ and
$A\bigstar A=1$ for any $A\in E(F)$. By Lemma \ref{le2.3},
\[
  1=(M(-1,0)\cdot M(-1,0))\bigstar M(1,s)=(M(-1,0)\bigstar
  M(1,s))^2,
\]
so $M(-1,0)\bigstar M(1,s)$ is a $2$-torsion in $K_2(F)$. Since
\[
  M(-1,t)=M(-1,0)\cdot M(1,-t),\; M(-1,u)=M(-1,0)\cdot M(1,-u),
\]
by Lemma \ref{le2.3} and the first part, we have
\[
  M(-1,t)\bigstar M(1,u)=(M(-1,0)\bigstar M(1,u))(M(1,-t)\bigstar
  M(1,u))=M(-1,0)\bigstar M(1,u)
\]
and
\[
  M(-1,t)\bigstar M(-1,u)=(M(-1,0)\bigstar M(1,-u))(M(1,-t)\bigstar
  M(-1,0)),
\]
hence they are $2$-torsion.

For (3), we can find $P\in GL_2(F)$ such that
\[
  PUP^{-1}=\left[
   \begin{matrix}

  \pm1    & t         \\
  0    & \pm1

  \end{matrix}
  \right]\;\; \text{and} \;\;
  PVP^{-1}=\left[
   \begin{matrix}

  \pm1    & u        \\
  0    & \pm1

  \end{matrix}
  \right].
\]
Then it follows from the first two parts and Lemma \ref{le2.3} (3).
\end{proof}

The following proposition slightly generalizes
\cite[Lemma~4.1]{CCGLS}. The proof is the same.
\begin{proposition}\label{prop2.6}
Let $\pi$ be a free abelian group of rank two with $\{e_1,e_2\}$ its
basis. Let $f:\pi\rightarrow E(A)$ be a group homomorphism defined
by $f(e_1)=U$, $f(e_2)=V$. Then there is a generator $t$ of
$H_2(\pi;\mathbb{Z})$ such that $\alpha(f_{*}(t))=U\bigstar V$,
where $\alpha:H_2(E(A);\mathbb{Z})\rightarrow K_2(A)$ is the
canonical isomorphism and $f_{*}:H_2(\pi;\mathbb{Z})\rightarrow
H_2(E(A);\mathbb{Z})$ is the homomorphism induced by $f$.
\end{proposition}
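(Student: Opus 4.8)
The plan is to connect the abstract symbol $U \bigstar V$, defined via the Steinberg group and commutators, to the group homology $H_2(\pi;\mathbb{Z})$ of the free abelian group of rank two. The key observation is that $\pi \cong \mathbb{Z}^2$ has $H_2(\pi;\mathbb{Z}) \cong \mathbb{Z}$, generated by a class naturally described in terms of the two basis elements $e_1, e_2$; concretely, this generator is the image of the fundamental class of the $2$-torus $T^2 = K(\pi,1)$, and it can be represented at the chain level by the cycle $e_1 \otimes e_2 - e_2 \otimes e_1$ in the bar complex, or equivalently via the commutator pairing. First I would fix such a generator $t \in H_2(\pi;\mathbb{Z})$ and recall the standard description of the isomorphism $\alpha : H_2(E(A);\mathbb{Z}) \to K_2(A)$ arising from the universal central extension $0 \to K_2(A) \to St(A) \to E(A) \to 1$, identifying $K_2(A)$ with the center of $St(A)$.

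The heart of the argument is to trace through how $f_*$ sends $t$ into $H_2(E(A);\mathbb{Z})$ and then how $\alpha$ evaluates on the result. Since $U = f(e_1)$ and $V = f(e_2)$ commute in $E(A)$, the homomorphism $f$ factors through the abelianization structure, and the generator $t$ of $H_2(\pi;\mathbb{Z})$ pushes forward to a homology class detecting precisely the commutator relation among lifts. I would use the standard fact, underlying the very definition of the symbol in the excerpt, that for the universal central extension the connecting map in group homology sends the fundamental $2$-cycle of a pair of commuting elements to their commutator in the kernel: if $u, v \in St(A)$ are lifts of $U, V$, then $[u,v] \in K_2(A)$ represents $\alpha(f_*(t))$. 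This is exactly the commutator construction that defines $U \bigstar V$ in the preceding subsection, so matching the two requires only checking that the chosen generator $t$ induces the commutator $[u,v]$ rather than its inverse, which one absorbs into the choice of $t$ (reversing orientation of $T^2$ flips the generator and inverts the commutator by Lemma \ref{le2.3}(1)).

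Concretely, I would argue as follows. The pairing $H_1(\pi) \otimes H_1(\pi) \to H_2(\pi)$ given by the cup-product dual, together with the interpretation of $H_2$ of an abelian group via exterior powers ($H_2(\mathbb{Z}^2;\mathbb{Z}) \cong \bigwedge^2 \mathbb{Z}^2 \cong \mathbb{Z}$), shows that $H_2(\pi;\mathbb{Z})$ is generated by $e_1 \wedge e_2$. The naturality of the universal central extension and of the isomorphism $\alpha$ reduces the computation of $\alpha(f_*(e_1 \wedge e_2))$ to the classical formula expressing the image of $e_1 \wedge e_2$ under a homomorphism to a group $G$ as the commutator of lifts into the universal central extension of $[G,G]$. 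Applying this with $G = E(A)$ and lifting $U, V$ to $u, v \in St(A)$ yields $[u,v]$, which is $U \bigstar V$ by definition.

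The main obstacle I anticipate is pinning down the precise sign and orientation conventions so that the generator $t$ is chosen to give $U \bigstar V$ and not $(U \bigstar V)^{-1} = V \bigstar U$. This is not a deep difficulty—by Lemma \ref{le2.3}(1) the two differ only by inversion, and the statement only asserts the existence of \emph{some} generator $t$—but it requires care in identifying the chain-level representative of the fundamental class of $T^2$ and tracking it through the (generally non-canonical at the cochain level) isomorphism $\alpha$. I would therefore phrase the final step as: choose the orientation of $H_2(\pi;\mathbb{Z}) \cong \mathbb{Z}$ so that the induced commutator is $[u,v]$; then $\alpha(f_*(t)) = U \bigstar V$ as claimed. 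Since the proof is genuinely the same as \cite[Lemma~4.1]{CCGLS}, with only the trivial generalization from $SL_2$ to arbitrary $E(A)$, no new ideas beyond the functoriality of the universal central extension are needed.
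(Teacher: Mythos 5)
Your proposal is correct and follows essentially the same route as the paper: both identify $\alpha$ with the transgression of the universal central extension $0\to K_2(A)\to St(A)\to E(A)\to 0$ and evaluate it on $f_{*}(t)$ as the commutator $[u,v]$ of lifts of the commuting elements $U,V$, absorbing the sign ambiguity into the choice of generator $t$, exactly as the statement permits. The only difference is that where you cite the commutator-pairing formula as a standard fact, the paper derives it on the spot by mapping the free presentation $0\to [F,F]\to F\to \pi\to 0$ into the Steinberg extension (with $f_2([e_1,e_2])=U\bigstar V$) and chasing the induced diagram of homology spectral sequences, using that the map $H_2(\pi;\mathbb{Z})\to H_0(\pi;H_1([F,F];\mathbb{Z}))$ is an isomorphism whose target is generated by the class of $[e_1,e_2]$.
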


\begin{proof}
Since $\pi$ is abelian, $U$ and $V$ commute. $U\bigstar V$
is well-defined. Let $F$ be the free group on $\{e_1,e_2\}$. The
homomorphism $f$ gives rise to the following commutative diagram of
short exact sequences of groups:
\[
 \begin{CD}
  0@>>>[F,F]@>>>F@>>>\pi@>>>0\\
  @VVV @Vf_2VV @Vf_1VV @VfVV @VVV \\
  0 @>>> K_2(A) @>>> St(A) @>\phi>> E(A) @>>> 0
 \end{CD}
\]
where $f_2([e_1,e_2])=U\bigstar V$. Apply the homology spectral
sequence to the above diagram, we obtain the following diagram:
\[
  \begin{CD}
  H_2(\pi;\mathbb{Z})@>>>H_0(\pi;H_1([F,F];\mathbb{Z})) \\
  @Vf_{*}VV @VgVV\\
  H_2(E(A);\mathbb{Z})@>\alpha>>K_2(A)
  \end{CD}
\]

The top horizontal arrow is an isomorphism. The class of $[e_1,e_2]$
is the generator of $H_0(\pi;H_1([F,F];\mathbb{Z}))$. It is mapped
to $U\bigstar V$ by $g$ which is induced by $f_2$. Let $t$ be the
generator of $H_2(\pi;\mathbb{Z})$ mapped to the class of
$[e_1,e_2]$. Then we have $\alpha(f_{*}(t))=U\bigstar V$ by the
commutative diagram.
\end{proof}

\begin{corollary}\label{cor2.7}
(1). If $U=diag(u,u^{-1})$ and $V=diag(v,v^{-1})$, where $u,v$ are
units of $A$, then there is a generator $t$ of $H_2(\pi;\mathbb{Z})$
such that $\alpha(f_{*}(t))=\{u,v\}^2$. \\
(2). Suppose $A$ is a field. If $U$ and $V$ are two commuting
matrices in $SL_2(A)$ and their traces are $2$ or $-2$, then the
image of any generator of $H_2(\pi;\mathbb{Z})$ is $2$-torsion in
$K_2(A)$.
\end{corollary}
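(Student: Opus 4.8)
The plan is to read both parts off Proposition \ref{prop2.6} directly, reducing each statement to a symbol computation that is already available from Lemmas \ref{le2.4} and \ref{le2.5}. In both cases the group homomorphism $f:\pi\to E(A)$ is the one sending the basis $\{e_1,e_2\}$ to the given commuting pair $U,V$, so Proposition \ref{prop2.6} immediately supplies a generator $t$ of $H_2(\pi;\mathbb{Z})$ with $\alpha(f_*(t))=U\bigstar V$. Thus everything comes down to evaluating $U\bigstar V$.

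For part (1), I would take $U=diag(u,u^{-1})$ and $V=diag(v,v^{-1})$, which lie in $E(A)$ by the hypothesis on the product of diagonal entries, and apply Lemma \ref{le2.4}(3) to the diagonal matrices whose entries satisfy $u\cdot u^{-1}=v\cdot v^{-1}=1$. This yields $U\bigstar V=\{u,v\}\{u^{-1},v^{-1}\}$. The only remaining step is the short symbol manipulation $\{u^{-1},v^{-1}\}=\{u,v\}$: from bi-multiplicativity (Lemma \ref{le2.4}(2)) together with $\{1,v\}=1$ one gets $\{u^{-1},v\}=\{u,v\}^{-1}$, and passing through skew-symmetry (Lemma \ref{le2.4}(1)) to handle the second slot gives $\{u^{-1},v^{-1}\}=\{u^{-1},v\}^{-1}=\{u,v\}$. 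Combining these, $U\bigstar V=\{u,v\}^2$, and Proposition \ref{prop2.6} then produces the claimed generator $t$ with $\alpha(f_*(t))=\{u,v\}^2$.

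For part (2), I would apply Proposition \ref{prop2.6} to the commuting pair $U,V\in SL_2(A)\subset E(A)$, obtaining a generator $t$ with $\alpha(f_*(t))=U\bigstar V$, and then invoke Lemma \ref{le2.5}(3): since $U$ and $V$ commute and have trace $2$ or $-2$, the element $U\bigstar V$ is a $2$-torsion in $K_2(A)$, hence so is $\alpha(f_*(t))$. To upgrade this from the particular generator $t$ furnished by the proposition to \emph{any} generator, I would use that $H_2(\pi;\mathbb{Z})\cong\mathbb{Z}$ (the fundamental class of the torus $\pi\cong\mathbb{Z}^2$), so its only generators are $\pm t$; since $\alpha\circ f_*$ is a group homomorphism, $\alpha(f_*(-t))=(U\bigstar V)^{-1}$, which is again a $2$-torsion. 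Thus both generators, and hence any generator, map to $2$-torsion elements.

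I do not expect a genuine obstacle, since both parts follow formally once the preceding lemmas are in hand. The only points needing care are the two small symbol identities in part (1), where one must track which slot the multiplicativity is applied in and route the second-slot computation through skew-symmetry, and the observation in part (2) that the conclusion is asserted for \emph{any} generator $\pm t$ rather than the single $t$ returned by Proposition \ref{prop2.6}, which is precisely why the homomorphism property of $\alpha\circ f_*$ is needed at the end.
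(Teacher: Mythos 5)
Your proposal is correct and follows essentially the same route as the paper: both parts reduce via Proposition \ref{prop2.6} to evaluating $U\bigstar V$, with Lemma \ref{le2.4}(3) giving $U\bigstar V=\{u,v\}\{u^{-1},v^{-1}\}=\{u,v\}^2$ for (1) and Lemma \ref{le2.5}(3) giving the $2$-torsion conclusion for (2). The paper merely leaves implicit the small symbol identity $\{u^{-1},v^{-1}\}=\{u,v\}$ and the observation that the other generator $-t$ maps to the inverse, both of which you correctly spell out.
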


\begin{proof}
For $(1)$, by Lemma \ref{le2.4}, we have $U\bigstar
V=\{u,v\}\{u^{-1},v^{-1}\}=\{u,v\}^2$.

For (2), by Lemma \ref{le2.5} (3), $U\bigstar V$ is $2$-torsion in
$K_2(F)$.
\end{proof}

Now we can prove the following:

\begin{theorem}\label{thm1}
For each $i=1,\dots,n$, there is an integer
$\epsilon(i)=1\;\text{or}\; -1$, such that the symbol
$\prod_{i=1}^{n}\{l_i,m_i\}^{\epsilon(i)}$ is a torsion element in
$K_2(\mathbb{C}(Y^h))$.
\end{theorem}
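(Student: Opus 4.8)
The plan is to use the fundamental group structure of the link complement to produce, for each component, a torus subgroup whose image under a suitable representation realizes the symbol $\{l_i, m_i\}^2$ up to $2$-torsion, and then assemble these across all $n$ components. First I would invoke Proposition \ref{prop6} to obtain the finite field extension $F$ of $\mathbb{C}(Y^h)$ together with the representations $P_i : \pi_1(M_L) \to SL_2(F)$. For each fixed $i$, the pair $P_i(\mu_i)$, $P_i(\lambda_i)$ gives the diagonal matrices $\mathrm{diag}(m_i, m_i^{-1})$ and $\mathrm{diag}(l_i, l_i^{-1})$. Restricting $P_i$ to the peripheral subgroup $\mathrm{Im}(\pi_1(T_i) \to \pi_1(M_L))$, which is a free abelian group of rank two generated by $\mu_i$ and $\lambda_i$, I would apply Proposition \ref{prop2.6} and Corollary \ref{cor2.7}(1): the image of a generator $t_i$ of $H_2(\pi_1(T_i); \mathbb{Z})$ under $(P_i)_*$ equals, via $\alpha$, the symbol $\{l_i, m_i\}^2$ (or $\{m_i, l_i\}^2$, which by skew-symmetry differs only by taking inverses, accounting for the sign $\epsilon(i)$).

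The key topological input is that the fundamental class of the boundary torus $T_i$ is homologically trivial in $M_L$: the peripheral tori bound in the sense that $\sum_i [T_i] = 0$ in $H_2(M_L; \mathbb{Z})$, since $\partial M_L$ is the full boundary of the compact $3$-manifold $M_L$ and the fundamental class of a boundary is nullhomologous. More precisely, the sum of the inclusion-induced images $\sum_i (\iota_i)_*(t_i)$ vanishes in $H_2(\pi_1(M_L); \mathbb{Z})$. I would push this relation forward: each $P_i$ factors through $\pi_1(M_L)$, so I can compare the symbols $\{l_i, m_i\}^2$ coming from different components by relating them through a single representation defined over $F$. The natural way is to use that all the $P_i$ land in $SL_2(F)$ and that the peripheral classes satisfy a homological relation, forcing the product of the corresponding symbols to be trivial in $K_2(F)$ up to the contributions of the other peripheral tori.

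The main obstacle will be bridging $K_2(F)$ back down to $K_2(\mathbb{C}(Y^h))$, since Proposition \ref{prop6} only gives a representation over the finite extension $F$, whereas the theorem asserts torsion in $K_2(\mathbb{C}(Y^h))$. For this I would use the transfer (norm) map $N_{F/\mathbb{C}(Y^h)} : K_2(F) \to K_2(\mathbb{C}(Y^h))$ together with the projection formula, and the fact that the composite $N_{F/\mathbb{C}(Y^h)} \circ \mathrm{res}$ is multiplication by the degree $[F : \mathbb{C}(Y^h)]$. Applying the transfer to the relation in $K_2(F)$ yields that some integer multiple of $\prod_i \{l_i, m_i\}^{\epsilon(i)}$ is annihilated, or more carefully that the norm of the total symbol is itself torsion; since the symbol $\{l_i, m_i\}$ already lives in $K_2(\mathbb{C}(Y_i)) \subset K_2(\mathbb{C}(Y^h))$ by construction, the restriction-then-norm identity converts the $F$-level triviality into a torsion statement over $\mathbb{C}(Y^h)$.

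The remaining care concerns the $2$-torsion contributions from the other peripheral tori $T_j$, $j \neq i$: there the traces of $P_i(\mu_j)$ and $P_i(\lambda_j)$ are $\pm 2$, so by Corollary \ref{cor2.7}(2) the corresponding symbols are $2$-torsion rather than trivial. I would track these explicitly, observing that they contribute only torsion factors and can be absorbed into the final torsion conclusion. Assembling the homological relation $\sum_i (\iota_i)_*(t_i) = 0$, the identification of each piece with $\{l_i, m_i\}^{\pm 2}$ via Corollary \ref{cor2.7}, the $2$-torsion bookkeeping for the off-diagonal tori, and the transfer argument down to $\mathbb{C}(Y^h)$, I expect to conclude that $\prod_{i=1}^n \{l_i, m_i\}^{\epsilon(i)}$ is torsion, with the signs $\epsilon(i)$ determined by the orientation conventions in the homological relation. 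The transfer step down from $F$ is where I anticipate the technical heart of the argument to lie.
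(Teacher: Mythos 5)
Your proposal is correct and follows essentially the same route as the paper's proof: Proposition \ref{prop6} for the representations $P_i$ over $F$, Proposition \ref{prop2.6} and Corollary \ref{cor2.7} to identify the peripheral classes with $\{l_i,m_i\}^{\pm 2}$, the relation $\sum_i [T_i]=[\partial M_L]=0$ in $H_2(M_L;\mathbb{Z})$ pushed forward into $K_2(F)$, and the $2$-torsion bookkeeping for the off-diagonal tori $T_j$, $j\ne i$. The final descent step you flag as the technical heart is exactly how the paper concludes as well, by citing the argument of \cite[Proposition~3.2]{LW2}, which is the norm--restriction identity $N_{F/\mathbb{C}(Y^h)}\circ \mathrm{res}=[F:\mathbb{C}(Y^h)]$ you describe.
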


\begin{proof}
First, by Proposition \ref{prop6}, for each $i=1,\dots,n$, there
exists a finite extension $F$ of $\mathbb{C}(Y^h)$ and a
representation $P_i: \pi_1(M_L)\rightarrow SL_2(F)$ such that for
$1\leq j\leq n$, if $j\ne i$, then traces of $P_i(\lambda_j)$ and
$P_i(\mu_j)$ are either $2$ or $-2$; if $j=i$, then
\begin{equation*}
P_i(\lambda_i)=
  \left[
  \begin{matrix}
    l_i & 0\\
    0 & l^{-1}_i
  \end{matrix}
  \right] \, \text{and} \, \,
P_i(\mu_i)=
  \left[
  \begin{matrix}
    m_i & 0\\
    0 & m^{-1}_i
  \end{matrix}
  \right].
\end{equation*}

The inclusions of $\pi_1(T_i)$ into $\pi_1(M_L)$ induce the
homomorphisms $\pi_1(T_i)\rightarrow E(F)$ by composing with $P_i$.
This gives rise to the following homomorphisms in group
homology:
\begin{equation}\label{e1}
   \begin{CD}
    \bigoplus_{i=1}^{n}H_2(\pi_1(T_i);\mathbb{Z}) @>\alpha>>
    H_2(\pi_1(M_L);\mathbb{Z}) @>\beta>> H_2(E(F);\mathbb{Z})=K_2(F) \\
   \end{CD},
  \\
\end{equation}

where $\alpha=j_{1*}+\cdots+ j_{n*}$, $\beta=P_{1*}+\cdots+P_{n*}$;
$j_{i*}$ are the morphisms on the group homology induced by the
inclusions $j_i: \pi_1(T_i)\hookrightarrow \pi_1(M_L)$, and $P_{i*}$
are those induced by $P_i$.\\

 The orientation of $M_L$ induces an orientation on
each boundary torus $T_i$. Let $[T_i]$ be the orientation class of
$H_2(T_i; \mathbb{Z})=\mathbb{Z}$. By Corollary \ref{cor2.7} (1),
for each $i$, there is a generator $\xi_i$ of $H_2(\pi_1(T_i))$ such
that $P_{i*}(j_{i*}(\xi_i))=\{l_i,m_i\}^2$. Since $T_i$ is a
$K(\pi_1(T_i),1)$ space, $H_2(\pi_1(T_i); \mathbb{Z})=H_2(T_i;
\mathbb{Z})$. If $\xi_i=[T_i]$, define $\epsilon(i)=1$; if
$\xi_i=-[T_i]$, then define $\epsilon(i)=-1$.

Since $L$ is a hyperbolic link, $M_L$ is a $K(\pi_1(M_L),1)$ space.
Hence we have $H_2(\pi_1(M_L);\mathbb{Z})=H_2(M_L;\mathbb{Z})$.
Under this identification,
\[
 \alpha(\epsilon(1)\xi_1,\cdots, \epsilon(n)\xi_n)=\sum_{i=1}^{n}[T_i]=[\partial{M_L}]=0 \;\;
 \text{in} \;\; H_2(M_L;\mathbb{Z}).
\]
Therefore,
\begin{equation}\label{e2}
  \beta(\alpha(\epsilon(1)\xi_1,\cdots,
 \epsilon(n)\xi_n))=1 \;\;
 \text{in} \;\; K_2(F).
\end{equation}

On the other hand, we have
\begin{alignat}{2}
\beta(\alpha(\epsilon(1)\xi_1,\cdots,
 \epsilon(n)\xi_n))&=\beta(\sum_{i=1}^{n}j_{i*}(\epsilon(i)\xi_i))\notag \\
                   &=\sum_{k=1}^{n}P_{k*}(\sum_{i=1}^{n}j_{i*}(\epsilon(i)\xi_i))\notag \\
                   &=\sum_{i=1}^{n}P_{i*}(j_{i*}(\epsilon(i)\xi_i))+\sum_{1\leq i\ne k\leq n} P_{k*}(j_{i*}(\epsilon(i)\xi_i))\notag \\
                   &=\prod_{i=1}^{n}\{l_i,m_i\}^{2 \epsilon(i)}\cdot \prod_{1\leq i\ne k\leq n}P_k(\mu_i)\bigstar P_k(\lambda_i),\notag
\end{alignat}

where the last step follows from Proposition \ref{prop2.6} and
Corollary \ref{cor2.7}. Note also that we use multiplication in
$K_2(F)$.\\

By Corollary ~\ref{cor2.7} (2), $\prod_{1\leq i\ne k\leq
n}P_k(\mu_i)\bigstar P_k(\lambda_i)$ is  $2$-torsion. Compare with
(\ref{e2}), we see that $\prod_{i=1}^{n}\{l_i,m_i\}^{2 \epsilon(i)}$
is  $2$-torsion in $K_2(F)$. By the same argument as in
\cite[Proposition~3.2]{LW2}, $\prod_{i=1}^{n}\{l_i,m_i\}^{
\epsilon(i)}$ is torsion in $K_2(\mathbb{C}(Y^h))$.
\end{proof}

\begin{remark}
This theorem is a natural generalization of our previous result
\cite[Proposition~3.2]{LW2} about the hyperbolic knot case.
\end{remark}

\begin{remark}
Note that the proof of Theorem \ref{thm1} uses the condition that the
geometric component contains the character $\chi_0$ of the complete
hyperbolic structure. For a non-geometric component of the character
variety, it is not clear whether we can still have the similar
torsion property on it.
\end{remark}

\subsection{Deligne cohomology}
In this subsection we shall recall the definition of Deligne
cohomology, give the construction of the regulator map and apply it
to our situation.\\

Let $X$ be a nonsingular variety over $\mathbb{C}$. First let us
recall the definition of the (holomorphic) Deligne cohomology groups
of $X$. For more details, see \cite{Be,Br,EV}. We define the complex
$\mathbb{Z}(p)_{\mathscr{D}}$ of sheaves on $X$ as follows:
\begin{equation}\label{eq3.2.1}
\mathbb{Z}(p)_{\mathscr{D}}:
 \begin{CD}
 \mathbb{Z}(p)@>>> \mathcal{O}_{X}@>d>> \Omega^{1}_{X}@>d>>  \cdots @>d>> \Omega^{p-1}_{X},
\end{CD}
\end{equation}
where $\mathbb{Z}(p)$ is the constant sheaf $(2\pi
\sqrt{-1})^{p}\mathbb{Z}$ and sits in degree zero, $\mathcal
{O}_{X}$ is the sheaf of holomorphic functions on $X$, and
$\Omega^{i}_{X}$ is the sheaf of holomorphic $i$-forms on $X$. The
first map in (\ref{eq3.2.1}) is the inclusion and $d$ is the
exterior differential. The Deligne cohomology groups of $X$ are
defined as the hypercohomology of the complex
$\mathbb{Z}(p)_{\mathscr{D}}$:
\[
  H^{q}_{\mathscr{D}}(X;\mathbb{Z}(p)):=\mathbb{H}^{q}(X;\mathbb{Z}(p)_{\mathscr{D}}).
\]

For example, the exponential exact sequence of sheaves on $X$
\[
  0\rightarrow \mathbb{Z}(1)\rightarrow \mathcal {O}_{X}\rightarrow \mathcal
  {O}^{*}_{X}\rightarrow 0
\]
gives rise to a quasi-isomorphism between
$\mathbb{Z}(1)_{\mathscr{D}}$ and $\mathcal{O}^{*}_{X}[-1]$, where $
\mathcal{O}^{*}_{X}$ is the sheaf of non-vanishing holomorphic
functions on $X$. Moreover there is a quasi-isomorphism between
$\mathbb{Z}(2)_{\mathscr{D}}$ and the complex (\cite[~page 46]{EV})
\[
 \begin{CD}
   (\mathcal{O}^{*}_{X}@>d \log >> \Omega^{1}_{X})[-1].
 \end{CD}
\]

Therefore, we have for any integer $q$,
\[
 H^{q}_{\mathscr{D}}(X;\mathbb{Z}(1))=H^{q-1}(X;\mathcal{O}^{*}_{X});\;\; H^{q}_{\mathscr{D}}(X;\mathbb{Z}(2))=\mathbb{H}^{q-1}(X;\mathcal{O}^{*}_{X}\rightarrow
\Omega^{1}_{X}).
\] On the other hand, Deligne (\cite{De}) interprets
$\mathbb{H}^{1}(X;\mathcal{O}^{*}_{X}\rightarrow
\Omega^{1}_{X})=H^{2}_{\mathscr{D}}(X;\mathbb{Z}(2))$ as the group
of holomorphic line bundles with (holomorphic) connections over $X$.
For the details, we refer to \cite[Theorem 2.2.20]{Br}.

Let $\mathbb{C}(X)$ be the function field of $X$. Given two
functions $f,g \in \mathbb{C}(X)$, let $D(f,g)$ be the divisors of
the zeros and poles of $f$ and $g$, and let $|D(f,g)|$ denote its support.
Then we have the morphism:
\[
   (f,g):X-|D(f,g)|\longrightarrow \mathbb{C}^{*}\times
   \mathbb{C}^{*},
\]
given by $(f,g)(x)=(f(x),g(x))$.

Let $\mathcal {H}$ be the Heisenberg line bundle with connection on
$\mathbb{C}^{*}\times \mathbb{C}^{*}$. For its construction, see
\cite{Bl} and \cite[~Section 4]{Ram}. Pull back $\mathcal {H}$ along
$(f,g)$ to obtain a line bundle with connection on $X-|D(f,g)|$,
denoted by $r(f,g)$. Hence $r(f,g)\in
\mathbb{H}^{1}(V;\mathcal{O}^{*}_{V}\rightarrow
\Omega^{1}_{V})=H^{2}_{\mathscr{D}}(V;\mathbb{Z}(2))$, where
$V=X-|D(f,g)|$. Moreover we can represent $r(f,g)$ in terms of
\u{C}ech cocycles for
$\mathbb{H}^{1}(V;\mathcal{O}^{*}_{V}\rightarrow \Omega^{1}_{V})$.
Indeed, choose an open covering $(U_i)_{i\in I}$ of $V$ such that
the logarithm of $f$ is well-defined on every $U_i$, denoted by
$\log_{i}f$. Then $r(f,g)$ is represented by the cocyle $(c_{ij},
\omega_i)$, with
\begin{equation}\label{eq3.2.2}
  c_{ij}=g^{\frac{1}{2\pi \sqrt{-1}}(\log_j{f}-\log_i{f})}, \;\; \text{on}\;\,
  U_i\cap U_j;
\end{equation}
\begin{equation}\label{eq3.2.3}
  \omega_i=\frac{1}{2\pi \sqrt{-1}}\log_i{f}\,\frac{dg}{g}, \;\;
  \text{on}\;\, U_i.
\end{equation}
Its curvature is
\begin{equation}\label{eq3.2.4}
 R=\frac{1}{2\pi \sqrt{-1}}\frac{df}{f}\wedge \frac{dg}{g}.
\end{equation}
\begin{remark}
 There is a cup product $\cup$ on the Deligne cohomology groups
 (see \cite{Be,EV}). For $f,g \in
 H^{0}(X;\mathcal{O}^{*}_{X})=H^{1}_{\mathscr{D}}(X;\mathbb{Z}(1))$
 as above, the cup product $f\cup g$ is exactly the line bundle
 $r(f,g)\in H^{2}_{\mathscr{D}}(X;\mathbb{Z}(2))$.
\end{remark}

Furthermore, we have the following properties of $r(f,g)$:
\begin{proposition}\label{pr3.2.1}
$r(f_{1}f_{2},g)=r(f_1,g)\otimes r(f_2,g)$, $r(f,g)=r(g,f)^{-1}$,
and the Steinberg relation $r(f,1-f)=1$ holds if $f\ne 0$, $f\ne 1$.
\end{proposition}

\begin{proof}
See \cite{Bl,EV} and \cite[Section~4]{Ram}. The proofs there assume
that $X$ is a curve. But they are valid for arbitrary $X$ without
change. Note that in order to prove the Steinberg relation, we need
the ubiquitous dilogarithm function.
\end{proof}

Now by the definition of $K_2$ and Proposition \ref{pr3.2.1}, we see
the following
\begin{corollary}
We have the regulator map:
\[
    r: K_2(\mathbb{C}(X))\longrightarrow \underset{U\subset X:\text{Zariski
    open}}{\varinjlim}H^{2}_{\mathscr{D}}(U;\mathbb{Z}(2)),
\]
which maps the symbol $\{f,g\}$ to the line bundle $r(f,g)$.
\end{corollary}

Notice that when $\dim{X}=1$, the line bundle $r(f,g)$ is always
flat, but if $\dim{X}>1$, $r(f,g)$ is not necessarily flat.
Nevertheless, we have the following

\begin{proposition}\label{pro3.9}
If $x\in K_2(\mathbb{C}(X))$ is torsion, the corresponding
line bundle $r(x)$ is flat.
\end{proposition}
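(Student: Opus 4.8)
The plan is to show that the curvature of the line bundle $r(x)$ vanishes, which is precisely the assertion that its connection is flat. The key point is that passing to curvature is a group homomorphism whose target is a torsion-free abelian group, so it must annihilate any torsion element.

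First I would recall, from equation (\ref{eq3.2.4}), that the curvature of $r(f,g)$ is the holomorphic $2$-form $R=\frac{1}{2\pi i}\frac{df}{f}\wedge\frac{dg}{g}$. Taking curvature defines a map $c$ from $H^{2}_{\mathscr{D}}(U;\mathbb{Z}(2))$, the group of holomorphic line bundles with (holomorphic) connection on $U$, to the space $H^{0}(U;\Omega^{2}_{U})$ of global holomorphic $2$-forms. Since the curvature of a tensor product of line bundles-with-connection is the sum of the individual curvatures, and since the product in $K_2$ (written multiplicatively here) corresponds under $r$ to the tensor product of line bundles, the composite
\[
 K_2(\mathbb{C}(X)) \xrightarrow{\ r\ } \varinjlim_{U} H^{2}_{\mathscr{D}}(U;\mathbb{Z}(2)) \xrightarrow{\ c\ } \varinjlim_{U} H^{0}(U;\Omega^{2}_{U})
\]
is a homomorphism of abelian groups, sending each symbol $\{f,g\}$ to the class of $\frac{1}{2\pi i}\frac{df}{f}\wedge\frac{dg}{g}$. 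Its compatibility with the defining relations of $K_2$ (bi-multiplicativity and the Steinberg relation) is already built into Lemma \ref{le3.2.1}, together with the elementary facts that $\frac{df}{f}\wedge\frac{dg}{g}$ is bi-additive in $(f,g)$ and vanishes when $g=1-f$.

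Next I would observe that the target is torsion-free: for each Zariski open $U$ the space $H^{0}(U;\Omega^{2}_{U})$ is a $\mathbb{C}$-vector space, the restriction maps in the direct system are $\mathbb{C}$-linear, and hence the direct limit is again a $\mathbb{C}$-vector space, in particular an abelian group with no nonzero torsion. Now suppose $x\in K_2(\mathbb{C}(X))$ is torsion, say $x^{N}=1$ for some positive integer $N$. Applying the homomorphism $c\circ r$ gives $N\cdot c(r(x))=c(r(x^{N}))=c(r(1))=0$ in this torsion-free group, so $c(r(x))=0$. Thus the curvature of $r(x)$ vanishes on some sufficiently small Zariski open set, i.e.\ the connection on $r(x)$ is flat, as claimed.

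The only point requiring care is the clean identification of $c\circ r$ as a genuine homomorphism on all of $K_2(\mathbb{C}(X))$, rather than merely a formula on symbols; but this is supplied by the functoriality of the Deligne-cohomology cup product and by Lemma \ref{le3.2.1}, so I do not expect a real obstacle. The essential content is the purely formal observation that a homomorphism from any group into a torsion-free group kills torsion, and the only geometric input is the explicit curvature formula (\ref{eq3.2.4}) identifying the target with a space of holomorphic $2$-forms.
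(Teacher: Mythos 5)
Your proof is correct, and it reaches the conclusion by a more structural route than the paper. The paper's proof works at the \u{C}ech level: representing $r(x)$ by a cocycle $(c_{ij},\omega_i)$, torsionness of $r(x)$ in $\mathbb{H}^{1}(U;\mathcal{O}^{*}_{U}\rightarrow \Omega^{1}_{U})$ produces an integer $n>0$ and functions $t_i\in \mathcal{O}^{*}(U_i)$ (after refining the cover) with $c_{ij}^{n}=t_j/t_i$ and $n\omega_i=dt_i/t_i$; hence each $\omega_i=\frac{1}{n}\,d\log t_i$ is closed and the curvature vanishes. You instead package the same torsion-freeness phenomenon abstractly: curvature defines a group homomorphism $c$ from line bundles with holomorphic connection to the $\mathbb{C}$-vector space $H^{0}(U;\Omega^{2}_{U})$ (coboundaries $(t_j/t_i,\, d\log t_i)$ have curvature $d(d\log t_i)=0$, so $c$ is well defined on classes and additive under tensor product), and a homomorphism into a torsion-free group annihilates torsion. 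Your version avoids the cocycle bookkeeping and the refinement step, while the paper's explicit unwinding yields slightly more information, namely that $r(x)^{\otimes n}$ is trivial as a line bundle with connection, with connection form exact logarithmic, not merely that $r(x)$ is flat. Two small remarks: the ``only point requiring care'' you flag is in fact automatic, since the paper has already constructed $r$ as a homomorphism on all of $K_2(\mathbb{C}(X))$ (Steinberg and bimultiplicativity are absorbed there, via Lemma \ref{le3.2.1}), so it suffices that $c$ be additive on Deligne classes; and vanishing of $c(r(x))$ in the direct limit a priori only gives vanishing on a smaller dense Zariski open, but this is harmless both because $r(x)$ itself lives in $\varinjlim_{U}H^{2}_{\mathscr{D}}(U;\mathbb{Z}(2))$ and because on an irreducible variety a holomorphic $2$-form vanishing on a dense Zariski open vanishes identically, so the restriction maps you invoke are injective.
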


\begin{proof}
Let $U$ be the Zariski open subset over which the line bundle $r(x)$
is defined. Since $x$ is torsion in $K_2(\mathbb{C}(X))$, $r(x)$
is torsion in $\mathbb{H}^{1}(U;\mathcal{O}_{U}^{*}\rightarrow
\Omega^{1}_U)$. Choose a suitable open covering $(U_i)_{i\in I}$ of
$U$ such that $r(x)$ is represented by a \u{C}ech cocyle $(c_{ij},
\omega_i)$ with $c_{ij}\in \mathcal {O}^{*}(U_i\cap U_j)$ and
$\omega_i\in \Omega^{1}(U_i)$. Then there exists an integer $n>0$,
such that the class represented by the cocycle $((c_{ij})^n,
n\omega_i)$ is zero. Hence, there exists $t_i\in
\mathcal{O}_{X}^{*}(U_i)$ (or by a refinement covering of $\{U_i\}$)
, such that
\[
  c_{ij}^n=\frac{t_j}{t_i}, \;\; \omega_i=\frac{1}{n}\frac{dt_i}{t_i}.
\]

Therefore, $d\omega_i=0$ for all $i$ and the curvature is $0$.
\end{proof}

Let $|D|$ be the support of the divisors of zeros and poles of the
rational functions $m_i$, $l_i$ on $Y^h$, $1\leq i\leq n$. Define
$Y^h_0=Y^h-|D|$. The line bundle
$r(\prod_{i=1}^{n}\{l_i,m_i\}^{\epsilon(i)})$ is well-defined over
$Y^h_0$.

\begin{corollary}\label{cor3.2.1}
 The line bundle $r(\prod_{i=1}^{n}\{l_i,m_i\}^{\epsilon(i)})$ over
 $Y^h_0$ is flat, therefore it is an element of $H^1(Y^h_0;\mathbb{C}^{*})$.
\end{corollary}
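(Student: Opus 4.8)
The plan is to derive the corollary directly from the two results established just above, namely Theorem~\ref{thm1} and Proposition~\ref{pro3.9}, so that almost no new work is required. First I would invoke Theorem~\ref{thm1} with the variety $X=Y^h$: it asserts precisely that the symbol $\prod_{i=1}^{n}\{l_i,m_i\}^{\epsilon(i)}$ is a torsion element of $K_2(\mathbb{C}(Y^h))$. Feeding this torsion class into Proposition~\ref{pro3.9} then shows that the associated line bundle with connection $r(\prod_{i=1}^{n}\{l_i,m_i\}^{\epsilon(i)})$, which by construction is defined over the Zariski open set $Y^h_0=Y^h-|D|$, has vanishing curvature; that is, its connection is flat. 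This settles the first assertion immediately.

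It then remains to explain why a flat line bundle with connection over $Y^h_0$ determines a class in $H^1(Y^h_0;\mathbb{C}^{*})$. Here I would use the standard identification coming from the inclusion of the constant sheaf $\mathbb{C}^{*}=\ker(\mathcal{O}^{*}_{Y^h_0}\xrightarrow{d\log}\Omega^{1}_{Y^h_0})$ into the two-term complex $(\mathcal{O}^{*}_{Y^h_0}\xrightarrow{d\log}\Omega^{1}_{Y^h_0})$ computing $H^2_{\mathscr{D}}(Y^h_0;\mathbb{Z}(2))$; this induces a natural map $H^1(Y^h_0;\mathbb{C}^{*})\to H^2_{\mathscr{D}}(Y^h_0;\mathbb{Z}(2))$ whose image is exactly the subgroup of flat line bundles with connection. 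Concretely, I would start from the \v{C}ech description in Proposition~\ref{pro3.9}, where after a refinement the cocycle satisfies $c_{ij}^{n}=t_j/t_i$ and $\omega_i=\tfrac{1}{n}\,dt_i/t_i$. Passing to a further refinement on which each $t_i$ admits an $n$-th root $s_i$, so that $\omega_i=d\log s_i$, I would rescale the local trivialization over $U_i$ by $s_i$; this puts the connection into the trivial form $d$, and the transition functions become $\tilde c_{ij}=c_{ij}\,s_i/s_j$, which satisfy $\tilde c_{ij}^{n}=1$. Thus in the resulting flat frame the transition functions are locally constant (indeed $n$-th roots of unity), so they define a \v{C}ech $1$-cocycle for the constant sheaf $\mathbb{C}^{*}$ and hence a class in $H^1(Y^h_0;\mathbb{C}^{*})$. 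Equivalently, the flat connection has a monodromy representation $\pi_1(Y^h_0)\to\mathbb{C}^{*}$ with finite image, and $H^1(Y^h_0;\mathbb{C}^{*})=\mathrm{Hom}(\pi_1(Y^h_0),\mathbb{C}^{*})$ records precisely this monodromy.

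Since all the substantive content is carried by Proposition~\ref{pro3.9}, I do not expect any serious obstacle. The only point requiring a little care is the last identification: on $Y^h_0$ the open sets $U_i$ need not be simply connected, so each closed form $\omega_i$ is closed but may fail to be exact and the local $n$-th roots $s_i$ need not exist globally on $U_i$. This is handled exactly as in the proof of Proposition~\ref{pro3.9}, by passing to a sufficiently fine refinement of the covering; on such a refinement the local primitives and roots exist and the rescaling argument above goes through, completing the identification of $r(\prod_{i=1}^{n}\{l_i,m_i\}^{\epsilon(i)})$ with an element of $H^1(Y^h_0;\mathbb{C}^{*})$.
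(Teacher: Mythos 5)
Your proposal is correct and follows the paper's proof exactly: the paper's own argument is precisely the one-line deduction from Theorem~\ref{thm1} (the symbol is torsion in $K_2(\mathbb{C}(Y^h))$) combined with Proposition~\ref{pro3.9} (torsion classes give flat bundles). Your additional paragraph spelling out why a flat line bundle with connection yields a class in $H^1(Y^h_0;\mathbb{C}^{*})$ --- via the rescaling $\tilde c_{ij}=c_{ij}\,s_i/s_j$ with $\tilde c_{ij}^{\,n}=1$, equivalently the finite-image monodromy representation --- is a correct elaboration of a step the paper leaves implicit.
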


\begin{proof}
 This follows from Theorem~\ref{thm1} and Proposition~\ref{pro3.9}.
\end{proof}

Using the \u{C}ech cocycle for $r(f,g)$ given in (\ref{eq3.2.2}) and
(\ref{eq3.2.3}), we can represent
$r(\prod_{i=1}^{n}\{l_i,m_i\}^{\epsilon(i)})$ as follows. Choose an
open covering $\{U_{\alpha}\}_{\alpha \in \Lambda}$ of $Y^h_0$ such
that on every $U_{\alpha}$, the logarithms of $l_i$ are well-defined
and denoted by $\log_{\alpha}{l_i}$. Then
$r(\prod_{i=1}^{n}\{l_i,m_i\}^{\epsilon(i)})$ is represented by the
cocyle $(c_{\alpha \beta}, \omega_{\alpha})$:
\begin{equation}\label{eq3.2.5}
 c_{\alpha \beta}=\prod_{i=1}^{n}m_i^{{\epsilon(i)}[{\frac{1}{2\pi
 \sqrt{-1}}(\log_{\beta}{l_i}-\log_{\alpha}{l_i})}]},\;\; \text{on} \;
 U_{\alpha}\cap U_{\beta};
\end{equation}

\begin{equation}\label{eq3.2.6}
 \omega_{\alpha}=\sum_{i=1}^{n}\frac{\epsilon(i)}{2\pi \sqrt{-1}}(\log_{\alpha}{l_i})\,\frac{dm_i}{m_i}, \;\;
  \text{on}\;\, U_{\alpha}.
\end{equation}

Let $t_0=(l_1^0,m_1^0,\dots, l_n^0,m_n^0)\in Y^h_0$ be a point
corresponding to the hyperbolic structure of the link complement
$S^3-L$. Then the monodromy of the flat line bundle
$r(\prod_{i=1}^{n}\{m_i,l_i\}^{\epsilon(i)})$ give rises to the
representation $M: \pi_1(Y^h_0,t_0)\rightarrow \mathbb{C}^{*}$. With
its explicit descriptions (\ref{eq3.2.5}), (\ref{eq3.2.6}), we have
the following formula for $M$. Let $\gamma$ be a loop based at
$t_0$. Let $\log{l_i}$ be a branch of logarithm of $l_i$ over
$\gamma-\{t_0\}$, then by a direct calculation we have (c.f.
\cite[~(2.7.2)]{De})

\begin{equation}\label{eq3.2.7}
 M(\gamma)=\exp{\{\sum_{i=1}^{n}(- \frac{\epsilon(i)}{2\pi
 \sqrt{-1}})(\int_{\gamma}\log{l_i}
 \,\frac{dm_i}{m_i}-\log{m_i(t_0)}\int_{\gamma}\frac{dl_i}{l_i})\}}.
\end{equation}

Now we have the main theorem:

\begin{theorem}\label{the3.2}
(i) The real $1$-form $\eta=\sum_{i=1}^{n}\epsilon(i)(\log{|l_i|}\;
d\arg{m_i}-\log{|m_i|}\; d\arg{l_i})$ is exact on $Y^h_0$. Hence
there exists a smooth function $V:Y^h_0\rightarrow \mathbb{R}$ such
that
\[
dV=\sum_{i=1}^{n}\epsilon(i)(\log{|l_i|}\;
d\arg{m_i}-\log{|m_i|}\; d\arg{l_i}).
\]\\
(ii) Suppose $m^0_i=1$, $1\leq i\leq n$. For a loop $\gamma$ with
initial point $t_0$ in $Y^h_0$
\[
   \frac{1}{4
   \pi^{2}}\sum_{i=1}^{n}\epsilon(i)\int_{\gamma}(\log{|m_i|}\, d\log{|l_i|}+\arg{l_i}\, d\arg{m_i})=
    \frac{p}{q},
\]
where $q$ is the order of the symbol
$\prod_{i=1}^{n}\{l_i,m_i\}^{\epsilon(i)}$ in
$K_2(\mathbb{C}(Y^h))$, and $p$ is some integer depending on the
loop $\gamma\in \pi_1(Y^h_0,t_0)$ and the branches of $\arg{l_i}$, $1\leq i
\leq n$.
\end{theorem}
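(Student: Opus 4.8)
The plan is to extract both parts directly from the flatness of the line bundle $r(\prod_{i=1}^{n}\{l_i,m_i\}^{\epsilon(i)})$ established in Corollary \ref{cor3.2.1}, together with the explicit monodromy formula \eqref{eq3.2.7}. For part (i), I would start from the curvature formula \eqref{eq3.2.4}, which for our symbol reads $R=\sum_{i=1}^{n}\frac{\epsilon(i)}{2\pi i}\,\frac{dl_i}{l_i}\wedge\frac{dm_i}{m_i}$. Since the bundle is flat, $R=0$ on $Y^h_0$. The idea is that writing $l_i=|l_i|e^{\sqrt{-1}\arg l_i}$ gives $\frac{dl_i}{l_i}=d\log|l_i|+\sqrt{-1}\,d\arg l_i$, and similarly for $m_i$; expanding the wedge product $\frac{dl_i}{l_i}\wedge\frac{dm_i}{m_i}$ and collecting the imaginary part of $R=0$ yields that the real $2$-form $\sum_i\epsilon(i)\bigl(d\log|l_i|\wedge d\arg m_i-d\log|m_i|\wedge d\arg l_i\bigr)$ vanishes. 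This $2$-form is exactly $d\eta$ for the $1$-form $\eta$ in the statement, so $\eta$ is closed. To upgrade closed to exact, I would show $\eta$ has trivial periods: integrating $\eta$ over a loop $\gamma$ reproduces (up to a constant factor) the modulus of the monodromy $|M(\gamma)|$ from \eqref{eq3.2.7}, and since $M(\gamma)$ is a root of unity (the symbol is torsion), $|M(\gamma)|=1$, forcing every period of $\eta$ to vanish. Then $\eta=dV$ for a globally defined smooth $V$.

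For part (ii), the strategy is to compute the argument of the monodromy $M(\gamma)$ rather than its modulus. Under the normalization $m_i^0=2$ one has $\log m_i(t_0)=\log 2$, a real constant; the plan is to take $\arg$ of both sides of \eqref{eq3.2.7}. Writing out the real and imaginary parts of the integrand $\log l_i\,\frac{dm_i}{m_i}$ and separating, the argument of the exponential picks out precisely the combination $\log|m_i|\,d\log|l_i|+\arg l_i\,d\arg m_i$ appearing in the statement, after the $\frac{dl_i}{l_i}$ correction term is absorbed using that its real normalization contributes only to $\log|M(\gamma)|$. The key input is that because the symbol has finite order $q$ in $K_2(\mathbb{C}(Y^h))$, the monodromy representation $M$ factors through the $q$-th roots of unity, so $M(\gamma)=\exp(2\pi\sqrt{-1}\,p/q)$ for some integer $p$ depending on $\gamma$. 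Matching the argument of this expression against the integral produced by \eqref{eq3.2.7} gives the claimed rational value $p/q$, with the factor $\frac{1}{4\pi^2}$ arising from the $-\frac{1}{2\pi i}$ prefactor combined with taking $\frac{1}{2\pi}$ of the argument.

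I expect the main obstacle to be the careful bookkeeping of the real and imaginary parts in \eqref{eq3.2.7}, specifically keeping track of which real-valued combination of $\log|l_i|$, $\arg l_i$, $\log|m_i|$, $\arg m_i$ lands in the modulus $|M(\gamma)|$ versus the argument $\arg M(\gamma)$. The integrand $\int_\gamma \log l_i\,\frac{dm_i}{m_i}$ is a product of complex quantities, and cross terms between real and imaginary parts must be tracked precisely; the second term $\log m_i(t_0)\int_\gamma\frac{dl_i}{l_i}$ contributes $\log 2\cdot(\log|l_i|$-difference plus $\sqrt{-1}$ times the total change in $\arg l_i$ around $\gamma)$, and one must verify that its real part cancels correctly against the first term so that only the asserted combination survives in the argument. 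A subtlety here is that $\arg l_i$ and $\log|l_i|$ need not be single-valued along $\gamma$, so I would work with a fixed branch $\log l_i$ over $\gamma-\{t_0\}$ as already set up before \eqref{eq3.2.7} and argue that the ambiguity in the branch changes $p$ by an integer without affecting the value $p/q$ modulo $\mathbb{Z}$. Once this separation of real and imaginary parts is done cleanly, both (i) and (ii) follow formally from flatness and torsion of the symbol.
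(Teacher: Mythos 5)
Your proposal is correct and matches the paper's own proof in essentially every step: closedness of $\eta$ follows from the vanishing curvature of the flat line bundle, exactness from the vanishing of periods forced by $|M(\gamma)|=1$, and part (ii) from the argument of the monodromy, which is a $q$-th root of unity because the symbol has order $q$ in $K_2(\mathbb{C}(Y^h))$. The real/imaginary bookkeeping you flag as the main obstacle is exactly what the paper dispatches as a straightforward calculation (citing \cite[Lemma~3.4]{LW2}), identifying $\int_{\gamma}\eta$ and $\int_{\gamma}\xi$ with the imaginary and real parts of the exponent in (\ref{eq3.2.7}).
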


\begin{proof}
First, by (\ref{eq3.2.6}), the curvature of the flat line bundle is
\[
 R=\sum_{i=1}^{n}\frac{\epsilon(i)}{2\pi
 \sqrt{-1}}(\frac{dl_i}{l_i}\wedge \frac{dm_i}{m_i})=0.
\]
On the other hand, we have $d\eta
=\text{Im}(\sum_{i=1}^{n}\epsilon(i)(\frac{dl_i}{l_i}\wedge
\frac{dm_i}{m_i}))$, hence $\eta$ is a real closed $1$-form.

Since the symbol $\prod_{i=1}^{n}\{l_i,m_i\}^{\epsilon(i)}$ has
order $q$ in $K_2(\mathbb{C}(Y^h))$, for a loop $\gamma\in
\pi_1(Y^h_0,t_0)$, by (\ref{eq3.2.7}) we have
\[
  1=M(\gamma)^{q}=(\exp{\{\sum_{i=1}^{n}(- \frac{\epsilon(i)}{2\pi
 \sqrt{-1}})(\int_{\gamma}\log{l_i}
 \,\frac{dm_i}{m_i}-\log{m_i(t_0)}\int_{\gamma}\frac{dl_i}{l_i})\}})^{q}.
\]
Write
$\displaystyle{\sum_{i=1}^{n}\epsilon(i)(\int_{\gamma}\log{l_i}\;\frac{dm_i}{m_i}-\log{m_i(t_0)}\int_{\gamma}\frac{dl_i}{l_i}}=Re+iIm$,
where $Re$ and $Im$ are the real and imaginary parts respectively.
Then we have  $\displaystyle{\exp{(\frac{q\cdot
Im}{2\pi}+\frac{q\cdot Re}{2\pi \sqrt{-1}})}=1}$. Therefore, $Im=0$
and $\displaystyle{\frac{q\cdot Re}{2\pi \sqrt{-1}}=2 \pi \sqrt{-1}
p}$, for some integer $p$. A straightforward calculation or \cite[~Lemma 3.4]{LW2} shows that
\begin{equation}\label{xieta}
 Im=\int_{\gamma}\eta,\;\; Re=-\sum_{i=1}^{n}\epsilon(i)\int_{\gamma}(\log{|m_i|}\,
d\log{|l_i|}+\arg{l_i}\, d\arg{m_i})= \int_{\gamma} \xi.
\end{equation}
These immediately imply both parts of the theorem.
\end{proof}

\begin{remark}
When $n=1$, our $V$ is (up to sign) the volume function of the
representation of the knot complement (\cite{Dun}). For $n\geq 2$,
up to some constant and signs related to the orientations on each
boundary component of the hyperbolic link exterior, the function $V$
should be closely related to the volume function given in
\cite[Theorem 5.5]{Ho}.
\end{remark}

\begin{remark}
From the proof of Theorem \ref{thm1}, the signs $\epsilon(i)$
($1\leq i\leq n$) are determined by the orientation of $M_L$ on its
$n$ boundary tori. For knots, the sign can be neglected since
there is only one term in the $1$-form $\eta$. For links ($n\geq 2$),
if they are not the same, they could have quite
different contributions compared with the knot case. On the other
hand, it is not clear what are the exact geometric meanings of these
signs for the link $L$.
\end{remark}

\begin{remark}
If there exists any representation $\rho: \pi_1(Y^h) \to
GL_n({\mathbb C}), n\geq 2$, then Reznikov (\cite[Theorem 1.1]{Re})
proved that for all $i \geq 2$, the Chern classes  $c_i \in
H^{2i}_{\mathscr D}(Y^h; {\mathbb Z}(i))$ in the Deligne cohomology
groups are torsion.
\end{remark}

\subsection{On the Bohr-Sommerfeld quantization condition for hyperbolic links}
In this subsection, we shall discuss the above Theorem \ref{the3.2}(ii)
from a symplectic point of view. When $n=1$, this is the
Bohr-Sommerfeld quantization condition proposed by Gukov for knots
in \cite[Page~597]{Guk}, and is proved in \cite[Theorem 3.3 (2)]{LW2}.

Let $\Sigma$ be a closed surface with fundamental group $\pi$. Its
$SL_2(\mathbb{C})$-character variety is the space of equivalence
classes of representations from $\pi$ into $SL_2(\mathbb{C})$. This
variety carries a natural complex-symplectic structure, where a
complex-symplectic structure is a nondegenerate closed holomorphic
exterior 2-form (see \cite{Go1, Go2}).

A homomorphism $\rho: \pi \to SL_2(\mathbb{C})$ is irreducible if it
has no proper linear invariant subspace of $\mathbb {C}^2$, and
irreducible representations are stable points, denoted by
$\text{Hom}(\pi, SL_2(\mathbb{C}))^s$. Now $SL_2(\mathbb{C})$ acts
freely and properly on $\text{Hom}(\pi, SL_2(\mathbb{C}))^s$, and
the quotient $X^s(\Sigma) = \text{Hom}(\pi,
SL_2(\mathbb{C}))^s/SL_2(\mathbb{C})$ is an embedding onto an open
subset in the geometric quotient $\text{Hom}(\pi, SL_2(\mathbb{C}))/
/ SL_2(\mathbb{C})$. Thus $X^s(\Sigma)$ is a smooth irreducible
complex quasi-affine variety which is dense in the geometric
quotient (see \cite[Section 1]{Go2}). Note that $\rho$ is a
nonsingular point if and only if $\dim Z(\rho )/Z(SL_2(\mathbb{C}))
= 0$, and this corresponds to the top stratum $X^s(\Sigma)$, where
$Z(u)$ is the centralizer of $u$ in $SL_2(\mathbb{C})$. If $\rho \in
\text{Hom}(\pi, SL_2(\mathbb{C}))$ is a singular point (i.e., $\dim
Z(\rho)/Z(SL_2(\mathbb{C})) >0$), then  all points of $\sigma \in
\text{Hom}(\pi, Z(Z(\rho)))^s$ with stab$(\sigma) = Z(\sigma) =
Z(\rho)$ have the same orbit type and form a stratification of the
$SL_2(\mathbb{C})$-character variety (see \cite[Section 1]{Go1}).

We have the $SL_2(\mathbb{C})$-character variety $X(T^2)$ of the
torus $T^2$ as a surface in ${\mathbb C}^3$ given by
$$x^2+y^2+z^2-xyz-4=0.$$
See \cite[Proposition 3.2]{LW1}.  There exists a natural symplectic
structure on the smooth top stratum $X^s(T^2)$ of $X(T^2)$, and
there exists a symplectic structure $\omega$ on the character
variety $X^s(\partial M_L)=\prod_{i=1}^nX^s(T_i^2)$ such that
$X(M_L)\cap X^s( \partial M_L) (\subset X(M_L))$ is a Lagrangian
subvariety of $X^s(\partial M_L)$, where $X^s(\partial M_L)$ is a
smooth irreducible variety which is open and dense in $X(\partial
M_L)$.

The inclusion $\partial M_L \to M_L$ indeed induces a degree-one map
on the irreducible components. Thus $r(X_0)^s$ (the smooth part of
the image $r(X_0)$) is a Lagrangian submanifold of the symplectic
manifold $X^s(\partial M_L)$. Note that the pullback of the
symplectic 2-form on the double covering of $X^s(T_i^2)$ is again
skew-symmetric and nondegenerate. The symplectic form
$\tilde{\omega}_i$ through the map $t_i$ on the irreducible
component gives the Lagrangian property for the corresponding
pullback of the Lagrangian part $r(X_0^i)^s$. Hence we have the
product Lagrangian smooth part of the pullback of $\prod_{i=1}^n
r(X_0^i)^s$. Then we need to see that the smooth projective model
preserves the Lagrangian and symplectic property.

Let $\tilde{X}(T_i^2)$ be the symplectic blowup of the double
covering of $X(T^2_i)$ as in \cite{MS}. The blowup in the complex
category carries a natural symplectic structure on
$\tilde{X}(T_i^2)$ (\cite[Section 7.1]{MS}). On the other hand, the
corresponding part $\overline{Y}_i$ of $Y_i$ (the irreducible
component of $D_i$ containing $y_i$) lies in the symplectic manifold
$\tilde{X}(T_i^2)$.

Define a compatible Lagrangian blowup with respect to the complex
blowup as following. Define a real submanifold $\tilde{\mathbb R}^n$
of ${\mathbb R}^n\times {\mathbb R}P^{n-1} (\subset {\mathbb C}^n
\times {\mathbb C}P^{n-1})$ as a subspace of pairs $(x, l)$ with $x
= Re(z)\in l$, where $l\in {\mathbb R}P^{n-1}$ is a real line in
${\mathbb R}^n$. If $I_{\mathbb C}$ is complex conjugation on
${\mathbb C}^n$ and $J_{{\mathbb C}P^{n-1}}$ is the complex
involution on ${\mathbb C}P^{n-1}$ given by complex conjugation on
each component, then
$$\tilde{\mathbb R}^n = Fix(I_{\mathbb C} \times J_{{\mathbb C}P^{n-1}}|_{\tilde{\mathbb C}^n})
\subset \tilde{\mathbb C}^n =\{(z_1, \dots, z_n; [w_1: \dots :
w_n]) | w_jz_k=w_kz_j, 1\leq j, k \leq n\}.$$ It is clear that
$\tilde{\mathbb R}^n$ is Lagrangian in $\tilde{\mathbb C}^n$. Hence
the real Lagrangian blowup $\tilde{\overline{Y}}_i$ is Lagrangian in
$\tilde{X}(T_i^2)$, and the Lagrangian submanifold $\tilde{Y}^h$ is
Lagrangian in the symplectic manifold
$\prod_{i=1}^n\tilde{X}(T_i^2)$. This only gives a way to have the
symplectic and Lagrangian properties being preserved under the
blowup, and treat the Lagrangian blowup in a real blowup with
respect to the complex one.

Now we have a Lagrangian submanifold $\tilde{Y_0^h}$ in a symplectic
manifold. Suppose $m^0_i=2$, $1\leq i\leq n$. For a loop $\gamma$
with initial point $t_0$ in $\tilde{Y_0^h}$, by Theorem
\ref{the3.2}(ii)
\[\frac{1}{4 \pi^{2}}\sum_{i=1}^{n}\epsilon(i)\int_{\gamma}(\log{|m_i|}\, d\log{|l_i|}+\arg{l_i}\, d\arg{m_i})=
    \frac{p}{q},\]
where $p$ is some integer and $q$ is the order of the symbol
$\prod_{i=1}^{n}\{l_i,m_i\}^{\epsilon(i)}$ in
$K_2(\mathbb{C}(Y^h))$.  We shall call this result the
Bohr-Sommerfeld quantization condition for hyperbolic links. It
would be interesting to give an interpretation from mathematical
physics, as what Gukov did for hyperbolic knots.

\section{On a possible unified Volume Conjecture for both knots and links}
In this section, we shall give some descriptions and speculations of
a possible parametrized volume conjecture which includes both
hyperbolic knots and links.

By Corollary \ref{cor3.2.1}, the class
$r(\prod_{i=1}^n\{l_i,m_i)^{\varepsilon_i})$ corresponds to a flat
line bundle over $Y^h_0$, therefore the curvature of the holomorphic
connection is zero. Formally this can be expressed as
$d(\xi+\sqrt{-1}\eta)= 0$, where $\xi$ and $\eta$ are defined in
(\ref{xieta}). Hence, $\frac{1}{2 \pi \sqrt{-1}}(\xi+\sqrt{-1}\eta)$
can be viewed as the Chern-Simons \emph{$1$-form} of the line bundle
$r(\prod_{i=1}^n\{l_i,m_i)^{\varepsilon_i})$.

Given a point $p\in Y^h_0$, choose a path $\gamma: [0,1]\rightarrow
Y^h_0$ with $\gamma(1)=p$ and $\gamma(0)=t_0$ a point
corresponding to the complete hyperbolic structure. Write
$\gamma(t)=(l(t), m(t))=(l_1(t), m_1(t), \dots, l_n(t), m_n(t))$.
Recall that $q$ is the order of the symbol $\prod_{i=1}^n\{l_i,
m_i\}^{\varepsilon_i}$ in $K_2(\mathbb{C}(Y^h))$. Let $Vol(L)$ and
$CS(L)$ be the volume and usual Chern-Simons invariant of the
complete hyperbolic structure on $S^3-L$ respectively. Now we define
\begin{equation}\label{vol}
  V(p)=Vol(L)+2\cdot \sum_{i=1}^{n}\epsilon(i) \int_{\gamma} (\log{|l_i|}\;
d\arg{m_i}-\log{|m_i|}\; d\arg{l_i}).
\end{equation}

\begin{equation}\label{scs}
  U(p)=4\pi^2CS(L)+q \cdot \sum_{i=1}^{n}\epsilon(i)\int_{\gamma}(\log{|m_i|}\, d\log{|l_i|}+\arg{l_i}\,
  d\arg{m_i}).
\end{equation}
According to Theorem \ref{the3.2}, we have the quantity
$$
R(p)=\frac{1}{2 \pi}(V(p)+
\frac{\sqrt{-1}}{2\pi}U(p))
$$
is independent of the choices of the path $\gamma$ and it takes
values in $\mathbb{C}/\mathbb{Z}$. We call
$\displaystyle{\frac{1}{4\pi^2}U(p)}$ the \emph{special Chern-Simons
invariant} of the hyperbolic link $L$ at $p$. When $p=t_0$, it
equals $CS(L)$.

\begin{remark}
For $p\ne t_0$, $\displaystyle{\frac{1}{4\pi^2}U(p)}$ is different
from the usual Chern-Simons invariant for a $3$-dimensional
manifold. The latter comes from the transgressive $3$-form of the
second Chern class of the $3$-dimensional manifold.
\end{remark}

In order to formulate a parametrized conjecture parallel to the knot
case as in \cite[Conjecture 3.9]{LW2}, we have to find a way of
relating the quantum invariants to the $n$-dimensional variety
$Y^h_0$ which comes from the $SL_2(\mathbb{C})$ character variety.
By the work of Kashaev and Baseilhac-Benedetti (\cite{BB, Ka1}),
there exists an $SL_2({\mathbb C})$ quantum hyperbolic invariant for
a hyperbolic link in $S^3$, which is conjectured to give the
information of the volume and Chern-Simons at the point for the
complete hyperbolic structure.

Here is a conjectural description. Given a point $p\in Y^h_0$
corresponding to an $SL_2(\mathbb{C})$ representation of
$\pi_1(M_L)$, let's assume that we can define certain quantum
invariants $K_N(L,p)$. Then we formulate the following:\\

\noindent{\bf Conjecture}: (\emph{A Possibly Unified Parametrized
Volume Conjecture})

\begin{equation}\label{uniconj}
\lim_{N\rightarrow \infty} \frac{\log{K_{N}(L,
p)}}{N}=\frac{1}{2 \pi}(V(p)+ \frac{\sqrt{-1}}{2\pi}U(p)).
\end{equation}
\newline

We have the following comments on the conjecture:\\
\begin{remark}
When $L$ is a hyperbolic knot (i.e., $n=1$), $Y^h$ is the smooth
projective model of an irreducible component of the locus of
$A$-polynomial which contains the complete hyperbolic structure. Fix
a number $a$, for $p=(l,m)\in Y^h_0$ with $m=-\exp{(\sqrt{-1}\pi
a)}$ , we take $K_N(L, p)=J_N(L, e^{2\pi \sqrt{-1}\; a/N})$, the
values of the colored Jones polynomial of $L$ evaluated at $e^{2\pi
\sqrt{-1}\; a/N}$. Then our unified Conjecture~\ref{uniconj} is
reduced to \cite[The Reformulated Generalized Volume Conjecture
(3.9)]{LW2} for hyperbolic knots. When $\gamma$ is the constant path
at $t_0$, or equivalently $p=t_0$, it reduces to the
Complexification of Kashaev's Conjecture for hyperbolic knots, see
\cite[~Conjeture 1.2]{MMOTY}.
\end{remark}

\begin{remark}
When $n\geq 2$, one can take $K_N(L, t_0)$ as the the Kashaev and
Baseilhac-Benedetti invariant which is based on the triangulations
of the manifold and is conjectured to give the information of the
volume and Chern-Simons at the complete hyperbolic structure $t_0$
(See \cite[~Section 5]{BB}). For a general $p\in Y^h_0$, although we
expect that there is a way of deforming $K_N(L, t_0)$ to get $K_N(L,
p)$, we do not have a rigorous definition.
\end{remark}

\begin{remark}
If the point corresponding to the hyperbolic structure in $Y_i$ is not smooth, then the point $t_0$ in the definition of (\ref{vol}) and (\ref{scs}) is not unique. If we make different choices of $t_0$, then $V(p)$ and $U(p)$ will be differed by a constant, corresponding to the integrals in (\ref{vol}) and (\ref{scs}) from one choice to another. We can modify the left-hand side of the Conjecture (4.3) by this constant accordingly. So the choice of $t_0$ is not essential and it seems that there is no canonical choice of it.
\end{remark}

\begin{remark}
From the regulator point of view developed in this paper, we expect
there exists a parametrized version of the volume conjecture for
both hyperbolic links and knots.
\end{remark}

\begin{acknowledgement}
Q.Wang is grateful for the support and hospitality of the Marie
Curie Research Programme at DPMMS, University of Cambridge and the
program ANR ``Galois" at the Universit\'{e} Pierre et Marie Curie
(Paris 6) and \'{E}cole Normale Sup\'{e}rieure, Paris. He wants to
thank professors Y. Andr\'{e} and A. Scholl for their helpful
discussions. We thank the referee for his/her helpful comments and suggestions.
\end{acknowledgement}

\end{document}